\numberwithin{equation}{section}
\newtheorem{theo}{Theorem}
\newtheorem*{theo*}{Theorem}
\newtheorem{prop}{Proposition}
\newtheorem{coro}{Corollary}
\newtheorem{lem}{Lemma}
\newtheorem{defi}{Definition}
\newtheorem*{defi*}{Definition}
\newtheorem{thmx}{Theorem}
\theoremstyle{remark}
\newtheorem*{Remarks*}{Remarks}
\newcommand{\bN}{\mathbb{N}}
\newcommand{\bZ}{\mathbb{Z}}
\newcommand{\bC}{\mathbb{C}}
\newcommand{\bP}{\mathbb{P}}
\newcommand{\bQ}{\mathbb{Q}}
\newcommand{\bK}{\mathbb{K}}
\newcommand{\bF}{\mathbb{F}}
\newcommand{\Qbar}{\overline{\mathbb Q}}
\newcommand{\cO}{\mathcal{O}}
\newcommand{\OK}{\mathcal{O}_{\mathbb{K}}}
\newcommand{\OKp}{\mathcal{O}_{\mathbb{K},\mathfrak{p}}}
\newcommand{\kp}{\mathfrak{p}}
\newcommand{\Gal}{\mathrm{Gal}}
\newcommand{\etilde}{\tilde{\eta}}
\newcommand{\etap}{\eta_{\kp}}
\newcommand{\cL}{\mathcal{L}}
\newcommand{\cF}{\mathcal{F}}
\newcommand{\cQ}{\mathcal{Q}}
\newcommand{\cLp}{\mathcal{L}_{\kp}}
\newcommand{\cst}{\mathrm{Cst}}
\newcommand{\Fix}{\mathrm{Fix}}
\begin{document}

\selectlanguage{english}

\title{Abel's problem, Gauss and Cartier congruences over number fields}
\date\today
\author{\'E. Delaygue and T. Rivoal}
\maketitle

\selectlanguage{english}

\begin{flushright}
{\em \`A la m\'emoire de Pierre Cartier}
\end{flushright}

\begin{abstract} Abel's problem consists in identifying the conditions under which the differential equation $y'=\eta y$, with $\eta$ an algebraic function in $\bC(x)$, possesses a non-zero algebraic solution $y$. This problem has been algorithmically solved by Risch. In a previous paper, we presented an alternative solution in the special arithmetic situation where $\eta$ has a Puiseux expansion with {\em rational} coefficients at the origin: there exists a non-trivial algebraic solution of $y'=\eta y$ if and only if the coefficients of the Puiseux expansion of $x\eta(x)$ at $0$ satisfy Gauss congruences for almost all prime numbers. In this paper, we generalize this criterion to arbitrary $\eta$ algebraic over $\Qbar(x)$, by means of a natural generalization to number fields of Gauss congruences and of the weaker Cartier congruences recently introduced in this context by Bostan. We then provide applications of this criterion in the hypergeometric setting and for Artin-Mazur zeta functions.
\end{abstract}

\section{Introduction}

Let $\eta$ be an algebraic function over $\bC(x)$. Does there exist a non-zero algebraic function solution of the following differential equation?
\begin{equation}\label{eq: Abel equation}
    y'=\eta y.
\end{equation}
This question was raised by Boulanger in \cite[p. 93]{Bou98} and referred to as Abel's problem. It naturally occurs in procedures to decide in a finite number of steps if all solutions of a linear differential equation with polynomial coefficients are algebraic over $\bC(x)$, see \cite{BD79, Sin80}. Risch \cite{Ris69} and later independently Baldassari and Dwork \cite{BD79} developed a procedure to solve Abel's problem in a finite number of steps. However, in many cases, those procedures are difficult to apply, especially when $\eta$ depends on several parameters.

In particular, motivated by questions from the theory of motives, Golyshev predicted that a non-zero algebraic solution to \eqref{eq: Abel equation} must exist whenever $x\eta(x)$ represents a generalized hypergeometric series that is both algebraic and factorial~(\footnote{We say that a generalized hypergeometric series is factorial when its Taylor coefficients at the origin can be expressed as ratios of factorials, see \S\ref{sec: Abel Hypergeometry} for a precise definition.}); see~\cite[p. 757]{Zag18}. The authors proved Golyshev's predictions in \cite{DR22} while establishing an arithmetic criterion for Equation \eqref{eq: Abel equation} to admit a non-zero algebraic solution. The latter criterion was based on Gauss congruences which turned out to be much simpler to check than to directly solve Abel's problem in the hypergeometric case. Recently, Bostan \cite{Bos24} observed that the vanishing of the $p$-curvature of \eqref{eq: Abel equation} yields another arithmetic criterion based on weaker congruences, which simplify the proof of Golyshev's predictions. Both criteria relied on the Chudnovsky's resolution of the Grothendieck--Katz conjecture in rank one \cite{CC85}, and were restricted to the case where $\eta$ admits a Puiseux expansion with rational coefficients at some point $\delta\in\bP^1(\mathbb{C})$. 
\medskip

In this paper, we generalize these criteria while proving a complete arithmetic criterion, namely Theorem \ref{theo: main} below, for Abel's problem associated to any algebraic function over $\Qbar(x)$. Naively switching from $\bQ$ to $\Qbar$ can change the outcome of Abel's problem as shown by the simple equations
$$
y'=\frac{r}{x}y\quad\textup{and}\quad y'=\frac{\sqrt{2}}{x}y,
$$
with $r\in\bQ$, where the former admits the algebraic solution $x^r$, while the non-zero solutions of the latter are all transcendental over $\mathbb{C}(x)$. 

Secondly, we apply our criterion in two different directions. We generalize Golyshev's predictions and our results in \cite{DR22}, see Theorem \ref{theo: HypAvgRoots} in \S\ref{sec: Abel Hypergeometry}, to obtain a new family of algebraic functions built from exponentials of generalized hypergeometric functions. Then, we exhibit several interesting classes of sequences satisfying the congruences involved in our main criterion. 
In particular, we briefly discuss consequences of our criterion for the algebraicity of Artin--Mazur zeta functions. 
\medskip

To state our arithmetic criterion, we first introduce some classical notions and notations of basic algebraic number theory, see \textit{e.g.} \cite{LangANT}. We consider here number fields over $\mathbb Q$ that are assumed to be Galois. We denote by $\OK$ the ring of integers of such a Galois number field $\bK$. For every rational prime number $p$, there are finitely many prime ideals $\kp$ of $\OK$ that contain $p$; we say that $\kp$ is above $p$ and from now on we will drop the word ``ideal''. If $\kp_1,\dots,\kp_n$ are the primes above $p$, then there exists a positive integer $e$ such that $p\mathcal{O}_{\mathbb K}=\kp_1^e\cdots \kp_n^e$ because $\bK/\bQ$ is Galois. If $e\geq 2$, then we say that $p$ is ramified in $\bK$, otherwise $p$ is said unramified. The set of ramified rational prime numbers of $\bK$ is finite: it is the set of the prime divisors of the absolute discriminant $\Delta_{\bK/\bQ}\in\bZ$ of the field extension $\bK/\bQ$. For every prime ideal $\kp$ we denote by $\OKp$ the localization of $\OK$ at $\kp$. For any unramified rational prime number $p$, there exists a unique automorphism $\tau_p\in\Gal(\bK/\bQ)$ such that, for every prime ideal $\kp$ above $p$, and every $x\in\OK$, we have $\tau_p(x)\equiv x^p\mod\kp\OK$, so that $\tau_p$ induces the Frobenius endomorphism of the residue field $k_\kp:=\OK/\kp\OK$.
\medskip

Gauss congruences are traditionally defined for sequences of rational numbers; see \cite{Zarelua} for a historical account. In the literature, these congruences are known under various names and are related to Dold sequences as well as realizable sequences (see~\cite{BGW21}). In this work, we generalize their definition to Galois number fields as follows.

\begin{defi}
Let $\kp$ be an unramified prime of $\OK$ above a rational prime $p$. A sequence $(a_n)_{n\in\bZ}$ of algebraic numbers in $\mathbb K$ is said to satisfy Gauss congruences for the prime $\kp$ if, for all integers $n$, we have 
$a_{np}-\tau_p(a_n)\in np\OKp$.
\end{defi}

This is equivalent to $a_{mp^{s+1}}-\tau_p(a_{mp^s})\in p^{s+1}\OKp$ for all integers $m\in\bZ$ and $s\geq 0$. We recover the usual definition when $\mathbb K=\mathbb Q$ because $\kp=(p)$ and $\tau_p=id$. 
\medskip

We now consider weaker congruences, which do not imply any congruence modulo higher powers of $p$.

\begin{defi}\label{defi: Cartier congruences}
Let $\kp$ be an unramified prime of $\OK$ above a rational prime $p$. We say that a sequence $(a_n)_{n\in\bZ}$ of algebraic numbers in $\mathbb K$ satisfies Cartier congruences for the prime $\kp$ if $
a_{np}-\tau_p(a_n)\in p\OKp$ for all integers $n$.
\end{defi}

If $\bF$ is a perfect field of characteristic $p$ endowed with the Frobenius automorphism~$\tau$, then the operator $\Lambda$ that sends an $\bF$-valued sequence $(a_n)_{n\in\bZ}$ to $(\tau^{-1}(a_{np}))_{n\in\bZ}$ is often called a Cartier operator, see \cite[p.~376]{AS03}. The analogy between Cartier operators and the congruences of Definition \ref{defi: Cartier congruences} motivated our terminology.  
\medskip

When a $\bK$-valued sequence $(a_n)_{n\in \mathbb Z}$ satisfies Gauss (resp.~Cartier) congruences for almost all primes $\kp$ (\footnote{We say that a property holds for almost all primes $\kp$ of $\OK$ when it holds for all but finitely many primes $\kp$ of $\OK$.}), we say that it has the \textit{Gauss (resp. Cartier) property}. In this case, we also say that its bilateral generating series $\sum_{n\in \mathbb Z} a_n x^n$ has the Gauss (resp. Cartier) property. In the particular situation that $a_n=0$ for all $n<r$ for some $r\in \mathbb Z$, \textit{i.e.} the generating series is a Laurent series in $\bK((x))$, we shall say that the sequence $(a_n)_{n\ge r}$ satisfies Gauss (resp. Cartier) congruences. However, it is easily shown (see Lemma \ref{lem: Cartier rational} in \S\ref{sec: proof thm main}) that in this case necessarily $a_n=0$ for negative $n$ and that $a_0\in\bQ$, or in other words that if $f\in\bK((x))$ has the Gauss or the Cartier property, then $f\in\bQ+x\bK[[x]]$. Having the Gauss or the Cartier property does not depend on the Galois number field in which the terms $a_n$, $n\in\bZ$, are embedded. 
\medskip

Our main result, proved in \S\ref{sec: proof thm main},  is the following.

\begin{theo}\label{theo: main}
Let $\eta\in\Qbar((x))$ be algebraic over $\Qbar(x)$. Then the following assertions are equivalent:
\begin{itemize}
    \item[$\mathrm{(i)}$] The differential equation $y'=\eta y$ admits a non-zero solution $y$ algebraic over $\Qbar(x)$;
    \item[$\mathrm{(ii)}$] $x\eta(x)$ has the Gauss property;
    \item[$\mathrm{(iii)}$] $x\eta(x)$ has the Cartier property.
\end{itemize}
\end{theo}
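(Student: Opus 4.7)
The plan is to prove the cycle of implications $\text{(i)} \Rightarrow \text{(ii)} \Rightarrow \text{(iii)} \Rightarrow \text{(i)}$. The middle implication is immediate from the definitions: the Gauss congruence $a_{np}-\tau_p(a_n)\in np\OKp$ is strictly stronger than the Cartier congruence $a_{np}-\tau_p(a_n)\in p\OKp$, since $np\OKp\subseteq p\OKp$.

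For $\text{(i)} \Rightarrow \text{(ii)}$, I would fix a Galois number field $\bK$ containing the Puiseux coefficients of $\eta$ and of a non-zero algebraic solution $y$. After a change of variable $x\mapsto x^{1/N}$ and an overall constant normalization, one may assume $y=x^{r}g(x)$ with $r\in\bQ$ and $g=1+\sum_{n\ge 1}c_n x^n\in\bK[[x]]$ algebraic over $\bK(x)$, so that $x\eta=r+xg'/g$. The constant $r\in\bQ$ has the Gauss property trivially, so the problem reduces to showing that $xg'/g$ has the Gauss property. The key input is a number-field generalization of the classical Dwork--Eisenstein statement: for an algebraic power series $g\in 1+x\bK[[x]]$, the coefficients $nb_n$, where $\log g=\sum_{n\ge 1}b_n x^n$, satisfy Gauss congruences at almost all unramified primes $\kp$ of $\OK$ with respect to $\tau_p$. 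Such a statement follows from Honda--Cartier type theorems for formal groups and from classical arguments on $p$-adic integrality of logarithms of algebraic functions, suitably twisted by $\tau_p$.

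For $\text{(iii)} \Rightarrow \text{(i)}$, which is the heart of the argument, I would invoke the rank-one Grothendieck--Katz conjecture proved by the Chudnovsky brothers, in its number-field version: the equation $y'=\eta y$ over $\bK(x)$ admits an algebraic solution over $\Qbar(x)$ if and only if, for almost every prime $\kp$ of $\OK$, its $p$-curvature vanishes modulo $\kp$. The vanishing of the $p$-curvature of $y'=\eta y$ modulo $\kp$ is classically equivalent to the reduction of $\eta$ being the logarithmic derivative of a $p$-th power in the residue field $k_\kp$. Unfolding this condition at the level of Laurent coefficients, one recognises precisely the Cartier congruence $a_{np}\equiv\tau_p(a_n)\pmod{\kp}$ for the coefficients of $x\eta$, the automorphism $\tau_p$ appearing because it lifts the $p$-power Frobenius on $k_\kp=\OK/\kp\OK$.

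The main obstacle is the correct handling of the $\tau_p$-twist in the number-field generalizations of both inputs: the Dwork-type integrality result for $\log g$ used in $\text{(i)}\Rightarrow\text{(ii)}$, and the coefficient-level translation of $p$-curvature vanishing used in $\text{(iii)}\Rightarrow\text{(i)}$. Once these Frobenius-twisted statements are formulated correctly, Chudnovsky's resolution of Grothendieck--Katz in rank one, together with the formal manipulations above, closes the cycle. A secondary technical point is the uniform exclusion of a finite set of ``bad'' primes --- those ramified in $\bK$, or dividing the absolute discriminant, or at which $\eta$ or $y$ have bad reduction --- so that the hypothesis ``almost all $\kp$'' is consistent across the three equivalences.
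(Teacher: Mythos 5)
Your proposal follows essentially the same route as the paper: $(\mathrm{ii})\Rightarrow(\mathrm{iii})$ is trivial, $(\mathrm{i})\Rightarrow(\mathrm{ii})$ is obtained from a Frobenius-twisted Dieudonn\'e--Dwork lemma applied to $\log g$ where $y=x^{r}g$ with $g$ an algebraic power series, and $(\mathrm{iii})\Rightarrow(\mathrm{i})$ combines the coefficient-level formula of Jacobson for the $p$-curvature of a rank-one operator with the Chudnovsky--Chudnovsky resolution of the Grothendieck--Katz conjecture in rank one. The only ingredient you elide is the preliminary observation (proved in the paper via Chebotarev's density theorem) that the Cartier property already forces $x\eta(x)\in\bQ+x\bK[[x]]$, which the paper uses to normalize $\eta$ to an integral power series before computing the $p$-curvature and to guarantee that $x^{a_0}$ is algebraic when transporting the solution back to $y'=\eta y$.
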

Let us first remark that if $\eta\in\Qbar((x))$ is algebraic over $\Qbar(x)$, then the Laurent coefficients of $\eta$ can be embedded into a Galois number field, so that Conditions $\mathrm{(ii)}$ and $\mathrm{(iii)}$ in Theorem \ref{theo: main} make sense. When $\Qbar$ is replaced by $\mathbb Q$ everywhere in Theorem \ref{theo: main}, the equivalence of $\mathrm{(i)}$ and $\mathrm{(ii)}$ was proved in \cite{DR22} by the authors, and the supplementary equivalence with $\mathrm{(iii)}$ was proved in \cite{Bos24} by Bostan.

Let us explain how Theorem \ref{theo: main} yields an arithmetic characterization of Abel's problem for algebraic functions over $\Qbar(x)$. Let $\varphi$ be an algebraic function over $\Qbar(x)$. Let $\delta\in\Qbar$, respectively $\delta=\infty$, and consider the Puiseux expansion of $\varphi$ at $\delta$  given respectively by 
\begin{equation}\label{eq:expansions}
\sum_{n=r}^\infty p_n (x-\delta)^{n/d}\quad\textup{and}\quad\sum_{n=r}^\infty p_n x^{-n/d},
\end{equation}
where $d\geq 1$ and $r$ are both integers. There exists a Galois number field $\bK$ such that the sequence $(p_n)_{n\in\mathbb{Z}}$ is $\bK$-valued, where $p_n=0$ for $n<r$. We say that $\varphi$ has the \textit{Gauss property} at $\delta$ if $(p_n)_{n\in\bZ}$ has the Gauss property; by the remark preceding Theorem \ref{theo: main}, the latter property implies that $p_n=0$ for $n<0$ and $p_0\in\bQ$. This is a generalization of the above definition for Laurent series in $\bK((x))$, when $d=1$ and $\delta=0$.  \medskip

A consequence of Theorem \ref{theo: main} is the following criterion, proved in \S\ref{sec: proof thm main} as well.

\begin{coro}\label{coro:super} Let $\eta$ be an algebraic function over $\Qbar(x)$ and $\delta\in\Qbar$, respectively $\delta=\infty$. Then the following assertions are equivalent:
\begin{itemize}
	\item[$\mathrm{(i)}$] The differential equation $y'=\eta y$ admits a non-zero solution $y$ algebraic over $\Qbar(x)$;
	\item[$\mathrm{(ii)}$] $(x-\delta)\eta(x)$, respectively $x\eta(x)$, has the Gauss property at $\delta$;
	\item[$\mathrm{(iii)}$] $(x-\delta)\eta(x)$, respectively $x\eta(x)$, has the Cartier property at $\delta$.
\end{itemize}
\end{coro}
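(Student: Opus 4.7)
The plan is to reduce Corollary \ref{coro:super} to Theorem \ref{theo: main} via a change of variable that sends the base point $\delta$ (respectively $\infty$) to $0$ and clears the denominator of the Puiseux expansion.

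Consider first the case $\delta\in\Qbar$, and write $\eta(x)=\sum_{n\geq r}p_n(x-\delta)^{n/d}$ with $d\geq 1$. Setting $v:=(x-\delta)^{1/d}$, so that $x=v^d+\delta$ belongs to $\Qbar[v]$, I introduce
$$
\tilde\eta(v):=\frac{dx}{dv}\,\eta(v^d+\delta)=dv^{d-1}\sum_{n\geq r}p_n v^n\in\Qbar((v)).
$$
Because $\eta$ is algebraic over $\Qbar(x)\subseteq\Qbar(v)$, $\tilde\eta$ is algebraic over $\Qbar(v)$. The differential equation $y'=\eta y$ in the variable $x$ becomes the equation $y'=\tilde\eta y$ in the variable $v$, and since $v$ is itself algebraic over $\Qbar(x)$, a function $y$ is algebraic over $\Qbar(x)$ if and only if it is algebraic over $\Qbar(v)$. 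Thus assertion (i) of Corollary \ref{coro:super} is equivalent to assertion (i) of Theorem \ref{theo: main} applied to $\tilde\eta$.

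Writing the Puiseux expansion $(x-\delta)\eta(x)=\sum_{m\geq r+d}q_m v^m$ with $q_m=p_{m-d}$, I find $v\tilde\eta(v)=d\sum_{m\geq r+d}q_m v^m$, so that the Laurent coefficient of $v^m$ in $v\tilde\eta(v)$ equals $dq_m$. Theorem \ref{theo: main} applied to $\tilde\eta$ therefore identifies assertion (i) of Corollary \ref{coro:super} with the Gauss (resp.\ Cartier) property of the sequence $(dq_m)_{m\in\bZ}$. Because $d$ is a fixed nonzero integer, for almost all primes $\kp$ it is a unit in $\OKp$ and fixed by $\tau_p$; hence scaling by $d$ preserves both Gauss and Cartier congruences, and the Gauss (resp.\ Cartier) property of $(dq_m)_{m\in\bZ}$ is equivalent to that of $(q_m)_{m\in\bZ}$. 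The latter is, by definition, assertion (ii) (resp.\ (iii)) of Corollary \ref{coro:super}, concluding the case $\delta\in\Qbar$.

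The case $\delta=\infty$ is handled in exactly the same way under the substitution $x=v^{-d}$, which yields $\tilde\eta(v)=-dv^{-d-1}\eta(v^{-d})\in\Qbar((v))$ and $v\tilde\eta(v)=-d\sum_{m\in\bZ}q_m v^m$, where $(q_m)_{m\in\bZ}$ is now the Puiseux sequence of $x\eta(x)$ at $\infty$. Since $-d$ is likewise a rational unit modulo almost every prime, the same argument applies. The only point that requires even a moment's thought is the invariance of Gauss and Cartier congruences under scaling by a nonzero rational constant, which is essentially immediate; apart from this, the corollary is a bookkeeping consequence of Theorem \ref{theo: main}.
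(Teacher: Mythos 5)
Your proposal is correct and follows essentially the same route as the paper: the substitution $x=t^d+\delta$ (resp.\ $x=t^{-d}$) reducing to Theorem \ref{theo: main} at the origin, with the same observation that the resulting factor $d$ (resp.\ $-d$) is a unit modulo almost every prime and hence harmless for the Gauss and Cartier congruences. The only difference is cosmetic bookkeeping (you expand $\eta$ and shift indices, the paper expands $(x-\delta)\eta(x)$ directly).
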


It immediately follows from Corollary \ref{coro:super} that the Gauss and Cartier properties are somehow independent of the point $\delta\in\bP^1(\Qbar)$ at which we expand an algebraic function $\eta$. If $\eta$ is an algebraic function and $\delta\in\Qbar$, then we write $\eta_\delta(x):=(x-\delta)\eta(x)$ and $\eta_\infty(x)=x\eta(x)$.

\begin{coro}\label{coro:Puiseux} Let $\eta$ be an algebraic function over $\Qbar(x)$ and $\delta_1$ and $\delta_2$ two points in $\bP^1(\Qbar)$. Then $\eta_{\delta_1}$ has the Gauss (resp. Cartier) property at $\delta_1$ if and only if $\eta_{\delta_2}$ has the Gauss (resp. Cartier) property at $\delta_2$.
\end{coro}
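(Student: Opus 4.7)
The plan is to deduce this corollary immediately from Corollary \ref{coro:super}, since the ``bridge'' condition (existence of a non-zero algebraic solution of $y'=\eta y$) is intrinsic to $\eta$ and makes no reference to the point of expansion.

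First, I would observe that the hypothesis of Corollary \ref{coro:super} is satisfied at every point $\delta\in\bP^1(\Qbar)$: the function $\eta$ is algebraic over $\Qbar(x)$ regardless of the base point we choose for its Puiseux expansion. Consequently, I can apply Corollary \ref{coro:super} at $\delta_1$: the Gauss (resp. Cartier) property of $\eta_{\delta_1}$ at $\delta_1$ is equivalent to assertion (i) of that corollary, namely that $y'=\eta y$ admits a non-zero algebraic solution over $\Qbar(x)$.

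Next, I would apply Corollary \ref{coro:super} a second time, this time at $\delta_2$, to conclude that the Gauss (resp. Cartier) property of $\eta_{\delta_2}$ at $\delta_2$ is equivalent to the very same statement (i). Chaining these two equivalences, one sees that the Gauss (resp. Cartier) property of $\eta_{\delta_1}$ at $\delta_1$ holds if and only if the Gauss (resp. Cartier) property of $\eta_{\delta_2}$ at $\delta_2$ holds, which is exactly the statement of Corollary \ref{coro:Puiseux}.

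There is essentially no obstacle here, since all the work has been done in Theorem \ref{theo: main} and Corollary \ref{coro:super}. The only minor subtlety to mention is that the two properties considered (Gauss for $\delta_1$ vs Gauss for $\delta_2$, and similarly for Cartier) are decoupled: Corollary \ref{coro:super} yields Gauss$\,\Leftrightarrow\,$(i) and Cartier$\,\Leftrightarrow\,$(i) separately at each point, so the equivalence chain respects the ``Gauss'' and ``Cartier'' versions independently. This justifies stating the corollary in the form given with the parenthetical ``(resp. Cartier)''.
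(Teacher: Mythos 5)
Your proposal is correct and follows exactly the paper's own route: the paper introduces Corollary \ref{coro:Puiseux} with the remark that it ``immediately follows from Corollary \ref{coro:super}'', i.e.\ by chaining the two equivalences with assertion (i) at $\delta_1$ and $\delta_2$, just as you do. Nothing is missing.
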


The paper is organized as follows. In \S\ref{sec:application}, we present various applications of our results. First we give a condition ensuring that Abel's equation has a non-trivial algebraic solution when $x\eta(x)$ is a suitable $\Qbar$-linear 
combination of generalized hypergeometric series, see \S\ref{sec: Abel Hypergeometry}. Then we provide examples of sequences of Taylor coefficients having the Cartier or Gauss property (globally bounded hypergeometric series, constant terms of Laurent polynomials, number of fixed points of iterates of maps) in \S\S\ref{ssec:cartierhyp}-\ref{ssec:constantterms}-\ref{sec: Artin Mazur}. In \S\ref{sec:tools}, we present various tools (Dieudonn\'e-Dwork's lemma over number fields, $p$-curvature) needed for the proofs of Theorem~\ref{theo: main} and Corollary~\ref{coro:super}, both given in \S\ref{sec: proof thm main}.

\section{Applications} \label{sec:application}

We first generalize the results of \cite{DR22} on Abel's problem in the hypergeometric case. Then we exhibit classical classes of sequences satisfying the Cartier or Gauss  property.  In Sections 2.1 and 2.3 our arguments yield \emph{Cartier} congruences but not \emph{Gauss} congruences; strengthening them appears substantially harder with our present techniques. By contrast, in §2.4 the dynamical setting naturally provides Gauss congruences. This contrast highlights the usefulness of our Cartier-based criteria, which already suffice to deduce algebraicity in these cases.

\subsection{Abel's problem and hypergeometry}\label{sec: Abel Hypergeometry}

Our article \cite{DR22} was motivated by predictions made by Golyshev concerning the algebraicity of exponential type functions defined in terms of certain algebraic hypergeometric series. Let us remind the reader that if $\boldsymbol{\alpha}:=(\alpha_1,\dots,\alpha_r)$ and $\boldsymbol{\beta}:=(\beta_1,\dots,\beta_s)$ are tuples of rational numbers in $\mathbb{Q}\setminus\mathbb{Z}_{\leq 0}$, then the generalized hypergeometric series with rational parameters \cite{slater} is defined by
$$
{}_{r}F_s\left[\begin{array}{c}\alpha_1,\dots,\alpha_r\\ \beta_1,\dots,\beta_s \end{array} ;x\right] := \sum_{n=0}^\infty \frac{(\alpha_1)_n\cdots (\alpha_r)_n}{(\beta_1)_n\cdots (\beta_s)_n} \frac{x^n}{n!},
$$
where $(\alpha)_n$ is the Pochhammer symbol defined by $(\alpha)_n:=\alpha(\alpha+1)\cdots (\alpha+n-1)$ for $n\geq 1$ and $1$ if $n=0$. We set
$$
\mathcal{Q}_{\boldsymbol{\alpha},\boldsymbol{\beta}}(n):=\frac{(\alpha_1)_n\cdots(\alpha_r)_n}{(\beta_1)_n\cdots(\beta_s)_n}\quad\textup{and}\quad \mathcal{F}_{\boldsymbol{\alpha},\boldsymbol{\beta}}(x):=\sum_{n=0}^\infty\mathcal{Q}_{\boldsymbol{\alpha},\boldsymbol{\beta}}(n)x^n,
$$
so that
$$
\mathcal{F}_{\boldsymbol{\alpha},\boldsymbol{\beta}}(x)= {}_{r+1}F_s\left[\begin{array}{c}\alpha_1,\dots,\alpha_r,1\\ \beta_1,\dots,\beta_s \end{array} ;x\right],
$$
which has a finite positive radius of convergence if and only if $r=s$. In the rest of this section, we assume that $r=s$ because this is a necessary condition for $\mathcal{F}_{\boldsymbol{\alpha},\boldsymbol{\beta}}(x)$ to be a non-polynomial algebraic function. The series $\mathcal{F}_{\boldsymbol{\alpha},\boldsymbol{\beta}}$ is said \textit{factorial} if it can be expressed in the form 
\begin{equation} \label{eq33}
\mathcal{F}_{\boldsymbol{\alpha},\boldsymbol{\beta}}(Cx) = \sum_{n=0}^\infty \frac{(e_1 n)!(e_2 n)!\cdots (e_u n)!}{(f_1 n)!(f_2 n)!\cdots (f_v n)!} x^n,
\end{equation}
where $C$ is a non-zero rational constant, and $(e_1,\dots,e_u)$ and $(f_1,\dots,f_v)$ are tuples of positive integers. The series $\mathcal{F}_{\boldsymbol{\alpha},\boldsymbol{\beta}}$ is factorial if and only if $\boldsymbol{\alpha}$ and $\boldsymbol{\beta}$ are tuples of parameters in $\mathbb{Q}\cap(0,1]$ satisfying
$$
\frac{(x-e^{2i\pi \alpha_1})\cdots(x-e^{2i\pi \alpha_r})}{(x-e^{2i\pi \beta_1})\cdots(x-e^{2i\pi \beta_s})}\in\mathbb{Q}(x),
$$
which is equivalent to saying that $\boldsymbol{\alpha}$ and $\boldsymbol{\beta}$ are $R$-partitioned in the sense of \cite[\S 7]{delaygue1}.
\medskip

Golyshev predicted, as reported by Zagier~\cite[p.~757]{Zag18}, that the algebraicity of a factorial generalized hypergeometric series $\mathcal{F}_{\boldsymbol{\alpha},\boldsymbol{\beta}}$ implies the algebraicity of the exponential function 
$$
y_{\boldsymbol{\alpha},\boldsymbol{\beta}}:=\exp\int\frac{\mathcal{F}_{\boldsymbol{\alpha},\boldsymbol{\beta}}(x)}{x}\mathrm{d}x.
$$
We proved these predictions that turned out to be sharp. Precisely, we showed \cite[Theorem~2]{DR22} that if $\mathcal{F}_{\boldsymbol{\alpha},\boldsymbol{\beta}}$ is algebraic over $\bQ(x)$, then $y_{\boldsymbol{\alpha},\boldsymbol{\beta}}$ is algebraic if and only if $\mathcal{F}_{\boldsymbol{\alpha},\boldsymbol{\beta}}$ is factorial.
\medskip

In particular, if $\mathcal{F}_{\boldsymbol{\alpha},\boldsymbol{\beta}}$ is algebraic but non-factorial, then $y_{\boldsymbol{\alpha},\boldsymbol{\beta}}$ is transcendental over $\bQ(x)$. However, one can always consider some other algebraic hypergeometric series naturally associated with $\mathcal{F}_{\boldsymbol{\alpha},\boldsymbol{\beta}}$ such that the product of their corresponding transcendental $y_{\boldsymbol{\alpha},\boldsymbol{\beta}}$'s becomes algebraic over $\bQ(x)$, see \cite[Theorem 3]{DR22}. In this article, we further generalize this latter theorem and Golyshev's predictions as follows. Let $\{\cdot\}$ stand for the fractional part function and consider the slight modification of $\{\cdot\}$ given, for all real numbers $x$, by $\langle x\rangle=\{x\}$ if $x$ is not an integer and $1$ otherwise (instead of $0$). We define $\langle\cdot\rangle$ on tuples component-wise, that is $\langle(\alpha_1,\dots,\alpha_r)\rangle:=(\langle\alpha_1\rangle,\dots,\langle\alpha_r\rangle)$. 

\begin{theo}\label{theo: HypAvgRoots}
Let $\boldsymbol{\alpha}$ and $\boldsymbol{\beta}=(\beta_1,\dots,\beta_r)$, with $\beta_r=1$, be disjoint tuples of rational numbers in $(0,1]$ such that $\mathcal{F}_{\boldsymbol{\alpha},\boldsymbol{\beta}}$ is algebraic over $\mathbb{Q}(x)$. Let $d\ge 1$ be the least common multiple of the denominators of the $\alpha_i$'s and $\beta_j$'s written in reduced form. Let $\xi$ be a $d$-th root of unity. Then the function 
\begin{equation}\label{eq:yProduct}
\underset{\gcd(k,d)=1}{\prod_{k=1}^{d}}\exp\int \xi^k\frac{\mathcal{F}_{\langle k\boldsymbol{\alpha}\rangle,\langle k\boldsymbol{\beta}\rangle}(x)}{x}\mathrm{d}x
\end{equation}
is algebraic over $\Qbar(x)$.
\end{theo}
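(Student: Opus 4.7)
The plan is to apply the Cartier form of Theorem \ref{theo: main} to the function $Y$ defined by \eqref{eq:yProduct}. Logarithmic differentiation yields $Y'=\eta Y$ with
\[
x\eta(x)=\sum_{\substack{1\le k\le d\\\gcd(k,d)=1}}\xi^{k}\,\mathcal{F}_{\langle k\boldsymbol{\alpha}\rangle,\langle k\boldsymbol{\beta}\rangle}(x)\in\bQ(\xi)[[x]],
\]
where the condition $\beta_r=1$ ensures $\langle k\beta_r\rangle=1$ so that every companion series is again of the form $\mathcal{F}_{\cdot,\cdot}$. It thus suffices to show that the sequence
\[
a_n\,:=\,\sum_{\substack{1\le k\le d\\\gcd(k,d)=1}}\xi^{k}\,\mathcal{Q}_{\langle k\boldsymbol{\alpha}\rangle,\langle k\boldsymbol{\beta}\rangle}(n)
\]
has the Cartier property over the (cyclotomic, hence Galois) number field $\bK:=\bQ(\xi)$.

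As a preliminary sub-step, each companion series is algebraic over $\bQ(x)$, since the Beukers--Heckman interlacing criterion is stable under $\boldsymbol{\alpha}\mapsto\langle k\boldsymbol{\alpha}\rangle$ for $\gcd(k,d)=1$; by Eisenstein's theorem its rational coefficients then lie in $\bZ_{(p)}$ for almost all primes $p$. The central technical input, which I expect to be the main obstacle, is a Dwork--Frobenius congruence: for any such algebraic $\mathcal{F}_{\boldsymbol{\gamma},\boldsymbol{\delta}}$ and almost all primes $p\nmid d$,
\[
\mathcal{Q}_{\boldsymbol{\gamma},\boldsymbol{\delta}}(np)\,\equiv\,\mathcal{Q}_{\langle p^{-1}\boldsymbol{\gamma}\rangle,\langle p^{-1}\boldsymbol{\delta}\rangle}(n)\pmod{p\bZ_{(p)}}\qquad(n\geq 0),
\]
with $p^{-1}$ computed modulo $d$. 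I would establish this by adapting the $p$-curvature arguments already used in \cite{DR22} and \cite{Bos24}.

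Granting this congruence, the Cartier condition is verified by a short reindexing. For $\kp$ an unramified prime of $\bK$ above $p\nmid d$, the Frobenius $\tau_p$ sends $\xi$ to $\xi^{p}$ and fixes $\bQ$, so $\tau_p(a_n)=\sum_{k}\xi^{pk}\mathcal{Q}_{\langle k\boldsymbol{\alpha}\rangle,\langle k\boldsymbol{\beta}\rangle}(n)$. Applying the Dwork congruence term by term and then performing the change of summation index $k\mapsto pj\bmod d$ --- noting that $\langle p^{-1}k\boldsymbol{\alpha}\rangle=\langle j\boldsymbol{\alpha}\rangle$ because $d$ kills the denominators of $\boldsymbol{\alpha},\boldsymbol{\beta}$ --- one gets
\[
a_{np}\,\equiv\,\sum_{j}\xi^{pj}\,\mathcal{Q}_{\langle j\boldsymbol{\alpha}\rangle,\langle j\boldsymbol{\beta}\rangle}(n)\,=\,\tau_p(a_n)\pmod{p\OKp},
\]
which is precisely the Cartier congruence for $\kp$. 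Theorem \ref{theo: main} then delivers the algebraicity of \eqref{eq:yProduct}.
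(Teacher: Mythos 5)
Your proposal is correct and follows essentially the same route as the paper: reduce to the Cartier property of $\sum_{k}\xi^{k}\mathcal{Q}_{\langle k\boldsymbol{\alpha}\rangle,\langle k\boldsymbol{\beta}\rangle}(n)$ via Theorem~\ref{theo: main}, then combine the Frobenius action $\xi\mapsto\xi^{p}$ with the reindexing $k\mapsto p^{-1}k\bmod d$. The ``Dwork--Frobenius congruence'' you flag as the main obstacle is exactly \cite[Lemma 2]{DR22} (with $a=p^{-1}\bmod d$ and the identity $\langle a\langle k\boldsymbol{\alpha}\rangle\rangle=\langle ak\boldsymbol{\alpha}\rangle$), which the paper simply cites, so no new argument is needed there.
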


Setting $\xi=1$ in Theorem \ref{theo: HypAvgRoots} yields the above mentioned \cite[Theorem 3]{DR22}.

\begin{proof}
Let $\bK$ be a Galois number field containing $\xi$. For every $k\in\{1,\dots,d\}$ coprime to $d$, the tuples $\langle k\boldsymbol{\alpha}\rangle$ and $\langle k\boldsymbol{\beta}\rangle$ are disjoint. By the criterion of Beukers and Heckmann \cite[Theorem 4.8]{BH89}, see \cite[Section 4.2]{DR22}, the series $\cF_{\langle k\boldsymbol{\alpha}\rangle,\langle k\boldsymbol{\beta}\rangle}$ is algebraic over $\bQ(x)$. Write
$$
\eta(x):=\frac{1}{x}\underset{\gcd(k,d)=1}{\sum_{k=1}^d} \xi^k\mathcal{F}_{\langle k\boldsymbol{\alpha}\rangle,\langle k\boldsymbol{\beta}\rangle}(x)\quad\textup{and}\quad \mathcal{Q}(n):=\underset{\gcd(k,d)=1}{\sum_{k=1}^d}\xi^k\mathcal{Q}_{\langle k\boldsymbol{\alpha}\rangle,\langle k\boldsymbol{\beta}\rangle}(n).
$$
Then $\eta$ is algebraic over $\bK(x)$ and the series in \eqref{eq:yProduct} is solution of $y'=\eta y$. By Theorem~\ref{theo: main}, it remains to prove that the sequence $(\cQ(n))_{n\geq 0}$ of the Taylor coefficients at $0$ of $x\eta(x)$ has the Cartier property.
\medskip

For every $k\in\{1,\dots,d\}$ coprime to $d$, the series $\mathcal{F}_{\langle k\boldsymbol{\alpha}\rangle,\langle k\boldsymbol{\beta}\rangle}$ is algebraic over $\bQ(x)$ so that, for every prime $p$, we can apply \cite[Lemma 2]{DR22} with $\langle k\boldsymbol{\alpha}\rangle$ and $\langle k\boldsymbol{\beta}\rangle$ instead of $\boldsymbol{\alpha}$ and $\boldsymbol{\beta}$ respectively. We obtain that, for every sufficiently large prime $p$ and all non-negative integers $n$, we have
$$
\cQ_{\langle k\boldsymbol{\alpha}\rangle,\langle k\boldsymbol{\beta}\rangle} (np)-\cQ_{\langle a\langle k\boldsymbol{\alpha}\rangle\rangle,\langle a\langle k\boldsymbol{\beta}\rangle\rangle}(n)\in p\bZ_{(p)},
$$
where $a\in\{1,\dots,d\}$ satisfies $ap\equiv 1\mod d$. Since $\langle a\langle k\boldsymbol{\alpha}\rangle\rangle=\langle ak\boldsymbol{\alpha}\rangle$ and $\langle a\langle k\boldsymbol{\beta}\rangle\rangle=\langle ak\boldsymbol{\beta}\rangle$, we have
\begin{equation}\label{eq: Qcong}
    \cQ_{\langle k\boldsymbol{\alpha}\rangle,\langle k\boldsymbol{\beta}\rangle} (np)-\cQ_{\langle a k\boldsymbol{\alpha}\rangle,\langle ak\boldsymbol{\beta}\rangle}(n)\in p\bZ_{(p)}.
\end{equation}

For almost all primes $\kp$ of $\OK$, all $k\in\{1,\dots,d\}$ coprime to $d$ and all non-negative integers $n$, $\mathcal{F}_{\langle k\boldsymbol{\alpha}\rangle,\langle k\boldsymbol{\beta}\rangle}$ is algebraic hence $\xi^k$ and $\cQ_{\langle k\boldsymbol{\alpha}\rangle,\langle k\boldsymbol{\beta}\rangle}(n)$ both belong to $\OKp$. Furthermore, given $\kp$ above the rational prime $p$ with $\tau_p$ the Frobenius element associated with $p$, and $a\in\{1,\dots,d\}$ satisfying $ap\equiv 1\mod d$, we have the congruence
\begin{equation}\label{eq: xicong}
\tau_p(\xi^{ak})\equiv \xi^{akp}\equiv \xi^k\mod \kp\OKp,
\end{equation}
since $\xi^k$ is a $d$-th root of unity. Then, for every non-negative integer $n$, Eqs.~\eqref{eq: xicong} and \eqref{eq: Qcong} yield
\begin{align*}
\cQ(np)&=\underset{\gcd(k,d)=1}{\sum_{k=1}^d}\xi^k\mathcal{Q}_{\langle k\boldsymbol{\alpha}\rangle,\langle k\boldsymbol{\beta}\rangle}(np)\\
&\equiv \underset{\gcd(k,d)=1}{\sum_{k=1}^d}\tau_p(\xi^{ak})\mathcal{Q}_{\langle ak\boldsymbol{\alpha}\rangle,\langle ak\boldsymbol{\beta}\rangle}(n)\mod \kp\OKp\\
&\equiv \tau_p(\cQ(n))\mod \kp\OKp.
\end{align*}
This shows that $x\eta(x)$ has the Cartier property and ends the proof of Theorem \ref{theo: HypAvgRoots}.
\end{proof}

As an illustration, let $\boldsymbol{\alpha}$ and $\boldsymbol{\beta}$ be such that
$$
\mathcal{F}_{\boldsymbol{\alpha},\boldsymbol{\beta}}(x)=\sum_{n=0}^\infty\frac{(1/4)_n(11/12)_n}{(1/2)_nn!}x^n.
$$
As explained in \cite{DR22}, the interlacing condition of Beukers and Heckmann yields four algebraic hypergeometric series: $f_1,f_5,f_7$ and $f_{11}$ which are respectively defined by their Taylor coefficients at $0$: 
$$
\frac{(1/4)_n(11/12)_n}{(1/2)_nn!},\quad \frac{(1/4)_n(7/12)_n}{(1/2)_nn!},\quad \frac{(5/12)_n(3/4)_n}{(1/2)_nn!}\quad\textup{and}\quad \frac{(1/12)_n(3/4)_n}{(1/2)_nn!}.
$$
Since none of those series is factorial, \cite[Theorem 2]{DR22} implies that, for all $i$ in $\{1,5,7,11\}$, the function
$$
y_i(x)=\exp\int\frac{f_i(x)}{x}\mathrm{d}x
$$
is transcendental over $\mathbb{Q}(x)$. But Theorem \ref{theo: HypAvgRoots} shows that, for any $12$-th root of unity $\xi$, the product 
$$
y_1^\xi y_5^{\xi^5}y_7^{\xi^7}y_{11}^{\xi^{11}}
$$ 
is algebraic over $\Qbar(x)$.

\medskip

In the rest of this section, we consider the following natural question (in view of Theorem~\ref{theo: main}): what are interesting examples of sequences with the Cartier or the Gauss property? We exhibit classical classes of sequences with one or the other, or both, of these properties.

\subsection{Cartier and Gauss properties and hypergeometry} \label{ssec:cartierhyp}

If $\cF_{\boldsymbol{\alpha},\boldsymbol{\beta}}$ is an algebraic hypergeometric series, then by Theorem \ref{theo: main} and \cite[Theorem 2]{DR22}, we obtain that $\cF_{\boldsymbol{\alpha},\boldsymbol{\beta}}$ has the Cartier (resp. Gauss) property if and only if $\cF_{\boldsymbol{\alpha},\boldsymbol{\beta}}$ is factorial. But we want to emphasis the fact that this equivalence remains true in a more general setting.

Let us recall that a power series $f\in\Qbar[[x]]$ is said globally bounded if it has a positive radius of convergence and if there is a non-zero integer $C$ such that $Cf(Cz)$ has algebraic integers coefficients. By Eisenstein's theorem, every algebraic power series $f\in\Qbar[[x]]$ is globally bounded. Then the proof of \cite[Theorem 4]{DR22} yields the following criterion.

\begin{prop}
	Let $\cF_{\boldsymbol{\alpha},\boldsymbol{\beta}}$ be a globally bounded hypergeometric series. Then the following assertions are equivalent.
	\begin{itemize}
	\item[$\mathrm{(i)}$]$\cF_{\boldsymbol{\alpha},\boldsymbol{\beta}}$ is factorial;
	\item[$\mathrm{(ii)}$] $\cF_{\boldsymbol{\alpha},\boldsymbol{\beta}}$ has the Gauss property;
	\item[$\mathrm{(iii)}$] $\cF_{\boldsymbol{\alpha},\boldsymbol{\beta}}$ has the Cartier property.	
	\end{itemize}
\end{prop}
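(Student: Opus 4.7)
The implication (ii) $\Rightarrow$ (iii) is immediate since $np\OKp\subseteq p\OKp$. For (i) $\Rightarrow$ (ii), if $\cF_{\boldsymbol{\alpha},\boldsymbol{\beta}}$ is factorial as in \eqref{eq33}, its Taylor coefficients are integer ratios of factorials, and such sequences are classically known to satisfy Gauss congruences; this is precisely the argument used in the proof of Theorem~4 of \cite{DR22}. The substantive content is therefore (iii) $\Rightarrow$ (i), and this is where I would focus.

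The plan for (iii) $\Rightarrow$ (i) is to combine two congruences. Global boundedness of $\cF_{\boldsymbol{\alpha},\boldsymbol{\beta}}$ forces $\boldsymbol{\alpha}$ and $\boldsymbol{\beta}$ to have a common denominator $d$. First, I would invoke the Dwork--Christol formal congruence for globally bounded hypergeometric coefficients, which is the content of Lemma~2 of \cite{DR22} and whose proof depends only on global boundedness: for every sufficiently large prime $p$ and every $n\ge 0$,
$$
\cQ_{\boldsymbol{\alpha},\boldsymbol{\beta}}(np) \equiv \cQ_{\langle a\boldsymbol{\alpha}\rangle,\langle a\boldsymbol{\beta}\rangle}(n) \pmod{p},
$$
where $a\in\{1,\dots,d\}$ satisfies $ap\equiv 1\pmod{d}$. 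Second, the Cartier property yields $\cQ_{\boldsymbol{\alpha},\boldsymbol{\beta}}(np)\equiv \cQ_{\boldsymbol{\alpha},\boldsymbol{\beta}}(n)\pmod{p}$ for almost all $p$. Subtracting, the fixed rational $\cQ_{\boldsymbol{\alpha},\boldsymbol{\beta}}(n)-\cQ_{\langle a\boldsymbol{\alpha}\rangle,\langle a\boldsymbol{\beta}\rangle}(n)$ is divisible by infinitely many primes, hence is zero. This upgrades the congruence to the honest equality
$$
\cQ_{\boldsymbol{\alpha},\boldsymbol{\beta}}(n)=\cQ_{\langle a\boldsymbol{\alpha}\rangle,\langle a\boldsymbol{\beta}\rangle}(n) \quad (n\ge 0),
$$
valid for every $a$ realized by some prime $p$ modulo $d$, i.e., by Dirichlet's theorem, for every $a\in(\bZ/d\bZ)^\times$.

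The final step is to convert this invariance into factoriality. The ratio $\cQ_{\boldsymbol{\alpha},\boldsymbol{\beta}}(n+1)/\cQ_{\boldsymbol{\alpha},\boldsymbol{\beta}}(n)=\prod_i(\alpha_i+n)/\prod_j(\beta_j+n)$ is a rational function of $n$ determined, up to cancellation, by the multisets $\{-\alpha_i\}$ and $\{-\beta_j\}$, so the equality above forces the reduced tuples obtained from $\boldsymbol{\alpha}$ and $\boldsymbol{\beta}$ (after removing common entries) to be preserved as multisets in $(0,1]\cap\bQ$ under each map $\alpha\mapsto\langle a\alpha\rangle$, $a\in(\bZ/d\bZ)^\times$. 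Applying $z\mapsto e^{2i\pi z}$, this Galois-stability of the corresponding multisets of $d$-th roots of unity is equivalent to
$$
\frac{\prod_i(x-e^{2i\pi\alpha_i})}{\prod_j(x-e^{2i\pi\beta_j})}\in\bQ(x),
$$
which is the $R$-partitioned condition characterizing factorial series. I expect the main obstacle to be verifying that the Dwork--Christol formal congruence (Lemma~2 of \cite{DR22}) indeed goes through verbatim from the algebraic to the globally bounded setting, though this is the natural regime in which such formal congruences are established; every other step is a direct adaptation of the proof of Theorem~4 of \cite{DR22}.
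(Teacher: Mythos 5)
Your proposal is correct and follows essentially the same route as the paper, which proves this proposition simply by invoking the proof of Theorem~4 of \cite{DR22}: the key ingredients you identify --- the congruence $\cQ_{\boldsymbol{\alpha},\boldsymbol{\beta}}(np)\equiv\cQ_{\langle a\boldsymbol{\alpha}\rangle,\langle a\boldsymbol{\beta}\rangle}(n)\bmod p$ from \cite[Lemma~2]{DR22} (which is indeed established there under global boundedness, not algebraicity), the upgrade to an equality via Dirichlet, and the translation of the resulting Galois-stability into the $R$-partitioned characterization of factoriality --- are exactly those of that proof. Since the sequence is $\bQ$-valued, the Cartier property does reduce to $\cQ(np)\equiv\cQ(n)\bmod p$ as you use, so no gap remains.
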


This completely characterizes hypergeometric series with the Gauss (resp. Cartier) property.

\subsection{Constant terms of powers of Laurent polynomials}\label{ssec:constantterms}

A classical result of Furstenberg \cite{Furstenberg} says that algebraic functions are diagonals of bivariate rational fractions. In some particular cases, one can prove that such diagonals have the Gauss property; see the results in \cite{BHS18} and \cite{BV21}, as well as \cite[Section 2.1]{DR22} for more details. But there also exists a large related class of sequences that has the Gauss property: the generating series of constant terms of powers of Laurent polynomials.
\medskip

Let $\bK$ be a Galois number field, $r$ a positive integer and $\lambda\in\bK[x_1^\pm,\dots,x_r^\pm]$ a Laurent polynomial with coefficients in $\bK$. We denote by $\cst(\lambda)$ the constant term of $\lambda$. We set
$$
f_\lambda(x):=\sum_{n=0}^\infty \cst(\lambda^n) x^n.
$$
When the Newton polygon of $\lambda$ has only one internal integral point, then $f_\lambda$ satisfies strong congruences modulo $\kp$ for almost all primes $\kp$ of $\OK$. Precisely, when $\bK=\bQ$, Samol and van Straten \cite{SvS15} proved that such $f_\lambda$'s satisfy Lucas congruences, while Mellit and Vlasenko \cite{MV16} proved that $f_\lambda$ in fact satisfies the more restrictive Dwork congruences.

Without any assumption on the Newton polygon of $\lambda$, Bostan, Straub, and Yurkevich have shown in \cite{BSY23} that $f_\lambda$ satisfies the Gauss property when $\bK = \bQ$. Their proof can be extended without difficulty to the case where $\bK$ is a number field.
\medskip

Another classical example of generating series with the Gauss property is given by 
$$
f_A(x):=\sum_{n=0}^\infty\mathrm{Tr}(A^n)x^n,
$$
where $A$ is any square matrix of algebraic integers. Gauss congruences for $f_A$ were first established by J\"anichen \cite{Janichen} in 1921  when the coefficients of $A$ are rational integers. We refer the reader to \cite{Steinlein} for a historical overview of Gauss congruences for the traces of powers of matrices.

\medskip

A way to study both series $f_\lambda$ and $f_A$ simultaneously is to consider the following construction. Given a positive integer $r$ and a square matrix $A$ with entries in $\bK[x_1^\pm,\dots,x_r^\pm]$, we set
$$
g_A(x):=\sum_{n=0}^\infty \mathrm{cst}(\mathrm{Tr}(A^n))x^n.
$$

\begin{prop}\label{prop: Laurent Cartier}
	Let $r\ge 1$ be an integer and $A$ a square matrix with entries in $\bK[x_1^\pm,\dots,x_r^\pm]$. Then the sequence $\mathrm{cst}(\mathrm{Tr}(A^n))$ has the Cartier property.
\end{prop}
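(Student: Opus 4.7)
The plan is to verify the Cartier congruence
$$\mathrm{cst}(\mathrm{Tr}(A^{np})) \equiv \tau_p\bigl(\mathrm{cst}(\mathrm{Tr}(A^n))\bigr) \pmod{p\OKp}$$
for almost all primes $\kp$ of $\OK$ (lying above a rational prime $p$) and all $n\ge 0$, by assembling a short chain of three elementary ``$p$-th power equals Frobenius modulo $p$'' identities. After excluding the finitely many $\kp$ that are ramified or for which the entries of $A$ fail to lie in $\OKp[x_1^\pm,\dots,x_r^\pm]$, I would fix such a $\kp$, work in the ring $R_\kp:=\OKp[x_1^\pm,\dots,x_r^\pm]$, and use that $p\OKp=\kp\OKp$ by unramifiedness.

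The first and main step is the trace identity $\mathrm{Tr}(M^p)\equiv \mathrm{Tr}(M)^p\pmod p$, valid for any square matrix $M$ over any commutative ring. I would prove it by expanding
$$\mathrm{Tr}(M^p)=\sum_{(i_1,\dots,i_p)} M_{i_1 i_2}M_{i_2 i_3}\cdots M_{i_p i_1}$$
and letting $\bZ/p\bZ$ act on the indexing tuples by cyclic rotation: by commutativity the summand is constant on each orbit, orbits of size $p$ contribute multiples of $p$, and the fixed tuples $(i,\dots,i)$ contribute $\sum_i M_{ii}^p$, which equals $\mathrm{Tr}(M)^p$ modulo $p$ by the freshman's dream. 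Applying this identity to $M=A^n$ yields $\mathrm{Tr}(A^{np})\equiv \mathrm{Tr}(A^n)^p$ in $R_\kp/pR_\kp$.

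The second step is another application of the freshman's dream, now for Laurent polynomials: for any $f=\sum_i a_i x^i\in R_\kp$ one has $f^p\equiv\sum_i a_i^p x^{pi}\pmod p$, whence $\mathrm{cst}(f^p)\equiv \mathrm{cst}(f)^p\pmod{p\OKp}$ because only the $i=0$ term survives extraction of the constant term. Taking $f=\mathrm{Tr}(A^n)$ and combining with step one gives $\mathrm{cst}(\mathrm{Tr}(A^{np}))\equiv \mathrm{cst}(\mathrm{Tr}(A^n))^p\pmod{p\OKp}$. The third step is the defining property of $\tau_p$: for any $y\in\OKp$ one has $y^p\equiv\tau_p(y)$ modulo $\kp\OKp=p\OKp$. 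Applying this to $y=\mathrm{cst}(\mathrm{Tr}(A^n))$ closes the chain.

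I do not foresee any serious obstacle: all three steps are classical and each is a pure characteristic-$p$ computation. This also clarifies why the method is intrinsically limited to Cartier congruences—each step collapses information modulo higher powers of $p$, so upgrading to the Gauss property would require a different lift (Witt vectors, Dwork-style congruences, or similar), which is precisely the contrast highlighted at the beginning of Section~\ref{sec:application}.
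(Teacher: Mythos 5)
Your proof is correct, and its overall architecture (reduce to a chain of ``$p$-th power equals Frobenius mod $\kp$'' congruences for almost all unramified $\kp$ with $A$ having entries in $\OKp[x_1^\pm,\dots,x_r^\pm]$) matches the paper's. The one genuinely different ingredient is how you establish the central identity $\mathrm{Tr}(M^{p})\equiv \mathrm{Tr}(M)^p\pmod p$: the paper reduces $A$ modulo $\kp$, takes the eigenvalues $\lambda_1,\dots,\lambda_s$ of $A_\kp$ in an algebraic closure, and writes $\mathrm{Tr}(A_\kp^{np})=\sum_i\lambda_i^{np}=(\sum_i\lambda_i^{n})^p=\mathrm{Tr}(A_\kp^{n})^p$ by the freshman's dream in characteristic $p$, whereas you prove the identity over an arbitrary commutative ring by letting $\bZ/p\bZ$ act by cyclic rotation on the index tuples in the expansion of $\mathrm{Tr}(M^p)$. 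Your argument is more elementary and slightly more robust: it sidesteps the need to introduce an algebraic closure of (the fraction field of) $k_\kp[x_1^\pm,\dots,x_r^\pm]$ and the implicit extension of $\mathrm{cst}$ to that overring, at the cost of a short combinatorial count; the paper's eigenvalue computation is quicker to state once one accepts that framework. The two proofs also order the final steps differently --- the paper applies the Frobenius lift $x_i\mapsto x_i^p$ to the whole Laurent polynomial $\mathrm{Tr}(A^n)$ and then commutes it with $\mathrm{cst}$, while you first extract the constant term of the $p$-th power and then apply $\tau_p$ to the scalar in $\OKp$ --- but these are interchangeable. Both arguments, as you correctly observe, are intrinsically mod-$\kp$ statements and cannot yield the Gauss property.
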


\begin{proof}
For almost every prime $\kp$ of $\OK$, all entries of $A$ lie in $\OKp[x_1^\pm, \dots, x_r^\pm]$. Let us now fix such a prime $\kp$ that is additionally unramified in $\bK$. We denote by $A_\kp$ the reduction of $A$ modulo $\kp$, obtained while reducing modulo $\kp$ each coefficient of each entry of $A$. Hence $A_\kp$ is a square matrix with entries in $k_\kp[x_1^\pm,\dots,x_r^\pm]$, where $k_\kp$ denotes the residue field of $\bK$ modulo $\kp$. 

We denote by $\tau$ the Frobenius element of $\bK$ associated with $\kp$. This automorphism naturally extends to a Frobenius lift in $\bK[x_1^\pm,\dots,x_r^\pm]$ that we still denote $\tau$. It corresponds to the endomorphism sending $\alpha$ to $\tau(\alpha)$ for $\alpha\in\bK$ and $x_i$ to $x_i^p$ for $i\in\{1,\dots,r\}$. For every Laurent polynomial $\lambda\in\OKp[x_1^\pm,\dots,x_r^\pm]$, we have $\lambda^p\equiv\tau(\lambda)\mod \kp$.
\medskip

Let $\Omega_\kp$ denotes an algebraic closure of $k_\kp[x_1^\pm,\dots,x_r^\pm]$. Let $\lambda_1,\dots,\lambda_s$ denote the eigenvalues of $A_\kp$ in $\Omega_\kp$, counted with multiplicities. Let $p$ be the characteristic of $k_\kp$, then, for every $n\in\bN$, we have
\begin{align*}
\mathrm{cst}(\mathrm{Tr}(A^{np}))&= \mathrm{cst}(\lambda_1^{np}+\cdots+\lambda_s^{np})\\
&= \mathrm{cst}((\lambda_1^n+\cdots +\lambda_s^n)^p)\\
&= \mathrm{cst}(\mathrm{Tr}(A^n)^p)\\
&\equiv \mathrm{cst}(\tau(\mathrm{Tr}(A^n)))\mod \kp\\
&\equiv\tau(\mathrm{cst}(\mathrm{Tr}(A^n)))\mod \kp,
\end{align*}
as desired.
\end{proof}

As a consequence of Theorem \ref{theo: main} and Proposition \ref{prop: Laurent Cartier}, we obtain the following result.

\begin{coro}
For every square matrix $A$ with entries in $\bK[x_1^\pm,\dots,x_r^\pm]$, then 
\begin{equation}\label{eq: zeta A}
g_A\quad\textup{and}\quad\exp\int\frac{g_A(x)}{x}\mathrm{d}x
\end{equation}
are either both algebraic or both transcendental over $\Qbar(x)$.
\end{coro}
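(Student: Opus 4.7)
The plan is to reduce the corollary to Theorem~\ref{theo: main} applied to $\eta(x) := g_A(x)/x \in \Qbar((x))$. With this choice, the function $y := \exp\int \frac{g_A(x)}{x}\mathrm{d}x$ is a non-zero solution of the Abel equation $y' = \eta y$, and crucially $x\eta(x) = g_A(x)$, so the sequence of Taylor coefficients at $0$ of $x\eta(x)$ is exactly $(\mathrm{cst}(\mathrm{Tr}(A^n)))_{n\ge 0}$.

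For the direction ``$g_A$ algebraic $\Rightarrow$ the exponential is algebraic'', I would first note that if $g_A$ is algebraic over $\Qbar(x)$, then so is $\eta = g_A/x$, which makes the hypothesis of Theorem~\ref{theo: main} applicable. Proposition~\ref{prop: Laurent Cartier} guarantees that $x\eta(x) = g_A(x)$ has the Cartier property, so assertion (iii) of Theorem~\ref{theo: main} holds. The implication $\mathrm{(iii)}\Rightarrow \mathrm{(i)}$ then yields a non-zero algebraic solution $\tilde{y}$ of $y' = \eta y$. Since any two non-zero solutions of a first-order linear equation $y'=\eta y$ differ by a multiplicative constant (the ratio has zero logarithmic derivative), $y$ itself is algebraic over $\Qbar(x)$.

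For the converse direction, I would invoke the standard fact that the logarithmic derivative of any non-zero algebraic function over $\Qbar(x)$ is again algebraic over $\Qbar(x)$: differentiating the minimal polynomial of $y$ and solving for $y'$ expresses $y'$ as a rational function in $y$ with coefficients in $\Qbar(x)$, so $y'/y$ is algebraic. Hence, if $y$ is algebraic, then $y'/y = g_A(x)/x$ is algebraic, and multiplying by $x$ shows that $g_A$ is algebraic as well.

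I do not expect a substantive obstacle here: the corollary is essentially a packaging of Theorem~\ref{theo: main} and Proposition~\ref{prop: Laurent Cartier}, together with the elementary remark on logarithmic derivatives. The only minor point worth verifying is that $\eta \in \Qbar((x))$ is indeed a Laurent series of the form covered by Theorem~\ref{theo: main}, which is automatic since $g_A \in \Qbar[[x]]$ forces $\eta$ to have at most a simple pole at the origin.
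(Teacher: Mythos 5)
Your proposal is correct and is exactly the argument the paper intends: the corollary is stated as an immediate consequence of Theorem~\ref{theo: main} (applied to $\eta = g_A/x$, using the equivalence of (i) and (iii)) and Proposition~\ref{prop: Laurent Cartier}, supplemented by the elementary facts that non-zero solutions of a first-order equation differ by a constant and that the logarithmic derivative of a non-zero algebraic function is algebraic. The paper leaves these routine verifications implicit, so your write-up simply makes the intended proof explicit.
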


If the variables $x_1, \dots, x_r$ do not commute and the Laurent polynomial ring $\bK[x_1^\pm, \dots, x_r^\pm]$ is replaced by the free group algebra $\bQ\langle x_1, x_1^{-1}, \dots, x_r, x_r^{-1} \rangle$, Kassel and Reutenauer showed in \cite{KR14} that the series in \eqref{eq: zeta A} are always algebraic, thereby generalizing Kontsevich's result \cite{Kon11} in the case where $A$ is a scalar matrix.

\subsection{Artin--Mazur zeta functions}\label{sec: Artin Mazur}

Finally, it is worth noticing that a simple construction from the theory of dynamical systems gives rise to a rich class of sequences with the Gauss (and hence the Cartier) property.
\medskip

Let $X$ be a set and $f:X\rightarrow X$ a map on $X$. If, for every positive integer $n$, the number $|\Fix(f^n)|$ of fixed points of the $n$-th iterate of $f$ is finite, then one can consider the Artin--Mazur zeta function associated with $f$:
$$
Z_f(x):=\exp\left(\sum_{n=1}^\infty\frac{\vert \Fix(f^n)\vert }{n}x^n\right)\in\bQ[[x]].
$$ 

For example, if $X$ is an affine variety over $\bF_p$ and $f$ is the Frobenius map defined on $X(\overline{\bF}_p)$, then, for every positive integer $n$, the number of points of $X$ in $\bF_{p^n}$ is 
$\vert\Fix(f^n)\vert$, so the local zeta function of $X$ is $Z_f$. In this case, it is known since \cite{Dwo60} that $Z_f$ is a rational function with integer Taylor coefficients at the origin. We refer the reader to \cite{BGNS23} for an overview of questions concerning the transcendental, algebraic, or rational nature of various Artin--Mazur zeta functions.

The following result is well-known, but we include its proof for the sake of completeness.

\begin{prop}\label{prop: Fix Gauss}
Let $X$ be a set and $f:X\rightarrow X$ a map such that, for every positive integer $n$, the set $\Fix(f^n)$ is finite. Then the sequence $(\vert \Fix(f^n)\vert)_{n\geq 1}$ satisfies Gauss congruences for every prime $p$.
\end{prop}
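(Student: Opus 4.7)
The plan is to reduce the statement to the classical fact that on the set of points of exact period $d$ under $f$, the cyclic group $\bZ/d\bZ$ acts freely, hence the cardinality is divisible by $d$. This will then imply the Gauss congruences by a straightforward bookkeeping of divisors.

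Concretely, for each positive integer $d$, let $N_d$ denote the cardinality of the set of points $x\in X$ with $f^d(x)=x$ and $f^e(x)\ne x$ for all $1\le e<d$ (points of exact period $d$). Each such set is finite because it is contained in $\Fix(f^d)$, and partitioning $\Fix(f^n)$ according to the exact period yields the identity
\[
\vert\Fix(f^n)\vert = \sum_{d\mid n} N_d.
\]
The key observation is that $f$ acts on the set of points of exact period $d$ with every orbit of length exactly $d$, so this set splits into a disjoint union of $\bZ/d\bZ$-orbits of size $d$; in particular $d\mid N_d$ for every $d\ge 1$.

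Now fix a prime $p$ and a positive integer $n$, and write $n=p^s m$ with $\gcd(m,p)=1$. The divisors of $np=p^{s+1}m$ are the integers of the form $p^j e$ with $0\le j\le s+1$ and $e\mid m$, among which the divisors of $n$ are exactly those with $j\le s$. Subtracting the two expansions above gives
\[
\vert\Fix(f^{np})\vert-\vert\Fix(f^n)\vert=\sum_{e\mid m} N_{p^{s+1}e}.
\]
By the divisibility $d\mid N_d$ applied to $d=p^{s+1}e$, every summand on the right-hand side is divisible by $p^{s+1}$, hence so is the whole sum. Since $v_p(np)=s+1$, this is exactly the statement that $\vert\Fix(f^{np})\vert-\vert\Fix(f^n)\vert\in np\,\bZ_{(p)}$, which is the Gauss congruence for the prime $p$ in the case $\bK=\bQ$ (where $\tau_p=\mathrm{id}$).

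There is no real obstacle here: the only thing to be careful about is matching the paper's normalization, where the Gauss congruence modulo $np$ at $p$ corresponds to divisibility by $p^{v_p(n)+1}$, and checking that this is precisely what the orbit decomposition produces.
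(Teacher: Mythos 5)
Your proof is correct. It rests on the same combinatorial input as the paper's proof, namely the partition of $\Fix(f^n)$ into $f$-orbits of exact length $d$ for $d\mid n$ (your $N_d$ is $d\,\mathcal{O}_d$ in the paper's notation, where $\mathcal{O}_d$ counts orbits of size $d$, and the divisibility $d\mid N_d$ is exactly the integrality of $\mathcal{O}_d$). Where you diverge is in how the congruence is extracted from this decomposition: the paper applies M\"obius inversion to $\vert\Fix(f^n)\vert=\sum_{d\mid n}d\,\mathcal{O}_d$ to get $\sum_{d\mid n}\mu(n/d)\vert\Fix(f^d)\vert\equiv 0\bmod n$, specializes to $n=mp^{s+1}$, and then needs an induction on $m$ to isolate the individual differences $\vert\Fix(f^{mp^{s+1}})\vert-\vert\Fix(f^{mp^s})\vert$; you instead subtract the two divisor sums for $np$ and $n$ directly, observe that the surviving terms are exactly the $N_{p^{s+1}e}$ with $e\mid m$, and conclude at once from $p^{s+1}\mid N_{p^{s+1}e}$. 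Your route is shorter and avoids both the inversion and the induction, at the cost of nothing; the paper's route has the minor virtue of producing the intermediate identity \eqref{eq: Necklace} (the necklace-type congruence), which is of independent interest. Your final remark about the normalization is also right: for integer sequences over $\bQ$, membership in $np\,\bZ_{(p)}$ is precisely divisibility by $p^{v_p(n)+1}$, which is what the orbit count delivers.
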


We obtain the following result as a consequence of Proposition \ref{prop: Fix Gauss} and Theorem \ref{theo: main}.

\begin{coro}\label{coro: Z_f}
	Let $X$ be a set and $f:X\rightarrow X$ a map such that, for every positive integer $n$, the set $\Fix(f^n)$ is finite. Then $Z_f\in\bZ[[x]]$ and
	$$
	Z_f(x)\quad\textup{and}\quad \sum_{n=1}^\infty \vert \Fix(f^n)\vert x^n
	$$
	are both algebraic or both transcendental over $\Qbar(x)$.
\end{coro}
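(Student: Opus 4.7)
The plan is to split the argument into two essentially independent parts: first the integrality $Z_f\in\bZ[[x]]$, and then the algebraicity equivalence. Both parts reduce to a short manipulation together with an appeal to Proposition \ref{prop: Fix Gauss} (and, for the second part, to Theorem \ref{theo: main}).

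For the integrality, I would set $a_n:=\vert\Fix(f^n)\vert$ so that $Z_f(x)=\exp\bigl(\sum_{n\geq 1}(a_n/n)x^n\bigr)$. By Proposition \ref{prop: Fix Gauss} the sequence $(a_n)_{n\geq 1}$ satisfies Gauss congruences for every prime $p$, which, via M\"obius inversion, is equivalent to the existence of integers $c_n$ such that $a_n=\sum_{d\mid n} d\,c_d$. Plugging this into the exponential and summing formally gives $Z_f(x)=\prod_{n\geq 1}(1-x^n)^{-c_n}$, which manifestly lies in $1+x\bZ[[x]]$. This is the standard Dold/realizable-sequence computation; the equivalence between Gauss congruences at every prime and the Dold condition $\sum_{d\mid n}\mu(n/d)a_d\in n\bZ$ is already referenced in the paper via \cite{Zarelua,BGW21}.

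For the algebraicity equivalence, I would define $\eta(x):=(1/x)\sum_{n\geq 1}a_n x^n$, so that $x\eta(x)=\sum_{n\geq 1}a_n x^n$ and $Z_f$ is a non-zero (indeed $Z_f(0)=1$) solution of $y'=\eta y$. If $Z_f$ is algebraic over $\Qbar(x)$, then $Z_f'$ is algebraic, and since $Z_f$ is invertible around $0$, $\eta=Z_f'/Z_f$ is algebraic, hence so is $x\eta(x)$. Conversely, if $x\eta(x)=\sum a_n x^n$ is algebraic over $\Qbar(x)$, then $\eta$ is algebraic, and since $x\eta(x)$ has the Gauss property by Proposition \ref{prop: Fix Gauss}, Theorem \ref{theo: main} produces a non-zero algebraic solution $\tilde y$ of $y'=\eta y$. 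The ratio $Z_f/\tilde y$ has vanishing logarithmic derivative, hence is constant, so $Z_f$ itself is algebraic.

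The main obstacle is the integrality step: everything related to the algebraicity equivalence is a one-line logarithmic-derivative manipulation together with the one-dimensionality of the solution space of a rank-one linear ODE plus a direct appeal to Theorem \ref{theo: main}, whereas the passage from Gauss congruences to $Z_f\in\bZ[[x]]$ genuinely uses the Dold/Witt-vector identity recalled above. Once the integer $c_n$'s have been produced from the Gauss congruences, the rest of the proof is essentially formal.
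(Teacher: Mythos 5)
Your proof is correct, and the algebraicity half (logarithmic derivative in one direction, Theorem \ref{theo: main} plus the one-dimensionality of the solution space of $y'=\eta y$ in the other) is exactly the argument the paper intends when it calls the corollary ``a consequence of Proposition \ref{prop: Fix Gauss} and Theorem \ref{theo: main}''. Where you genuinely diverge is the integrality step. The paper derives $Z_f\in\bZ[[x]]$ from the Gauss congruences via the Dieudonn\'e--Dwork lemma in the form of Corollary \ref{coro:1}: for each prime $p$, the congruence $a_{np}-a_n\in np\bZ_{(p)}$ translates into $s(x^p)-ps(x)\in px\bZ_{(p)}[[x]]$ for $s(x)=\sum_{n\ge1}(a_n/n)x^n$, whence $\exp(s)\in 1+x\bZ_{(p)}[[x]]$, and intersecting over all $p$ gives $\bZ[[x]]$. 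You instead invoke the Dold/necklace identity: writing $a_n=\sum_{d\mid n}d\,c_d$ with $c_d\in\bZ$ (here one can even take $c_d=\cO_d\ge 0$ directly from the orbit decomposition in the proof of Proposition \ref{prop: Fix Gauss}, which spares you the M\"obius-inversion detour through the Gauss congruences) and observing $Z_f(x)=\prod_{n\ge1}(1-x^n)^{-c_n}\in 1+x\bZ[[x]]$. Both are complete; yours is more elementary and combinatorial and exploits the specific dynamical origin of the sequence, while the paper's is uniform in that it only uses the Gauss property (so it applies verbatim to any rational sequence with Gauss congruences at every prime, not just to fixed-point counts) and stays within the $p$-adic toolkit already set up for the main theorem. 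One cosmetic caution: the asserted equivalence between ``Gauss congruences at every prime'' and the Dold condition $\sum_{d\mid n}\mu(n/d)a_d\equiv 0 \bmod n$ is true but is itself a small lemma (essentially the induction in the proof of Proposition \ref{prop: Fix Gauss} and its reverse); since you only need one implication and the integers $c_d$ are handed to you by the orbit count, it is cleaner not to lean on the full equivalence.
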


The integrality of the coefficients of $Z_f$ in Corollary \ref{coro: Z_f} follows directly from Gauss congruences for any prime, using the Dieudonn\'e-Dwork lemma; see Corollary \ref{coro:1} below.

\begin{proof}[Proof of Proposition \ref{prop: Fix Gauss}]
For every positive integer $n$, let $\cO_n$ denote the number of orbits of $f$ with exact size $n$. Then a point of $X$ is fixed by $f^n$ if and only if the size of its orbit divides $n$. That is
$$
\vert \Fix(f^n)\vert=\sum_{d\mid n}d\cO_d.
$$
By M\"{o}bius inversion formula, we obtain that
\begin{equation}\label{eq: Necklace}
\sum_{d\mid n}\mu\left(\frac{n}{d}\right)|\Fix(f^d)|=n\cO_n\equiv 0\mod n.
\end{equation}
Let $p$ be a fixed prime number and $m$ a positive integer coprime to $p$. Then \eqref{eq: Necklace} yields
\begin{align*}
\sum_{d\mid mp^{s+1}}\mu\left(\frac{mp^{s+1}}{d}\right)\vert \Fix(f^d)\vert&= \sum_{d\mid m}\left(\mu\left(\frac{m}{d}\right)\vert\Fix(f^{dp^{s+1}})\vert+ \mu\left(\frac{mp}{d}\right)\vert\Fix(f^{dp^s})\vert\right)\\
&=\sum_{d\mid m}\mu\left(\frac{m}{d}\right)(\vert\Fix(f^{dp^{s+1}})\vert-\vert\Fix(f^{dp^s})\vert)\\
&\equiv 0\mod p^{s+1}.	
\end{align*}
By induction on $m$, one proves that, for every positive integer $m$, we have
$$
\vert\Fix(f^{mp^{s+1}})\vert-\vert\Fix(f^{mp^s})\vert\equiv 0\mod p^{s+1},
$$
as desired. \end{proof}

\section{Tools in $p$-adic analysis}\label{sec:tools}

In this section, we gather and prove various results in $p$-adic analysis needed for the proof of our results.

\subsection{On the $p$-curvature of equations of order $1$} \label{ssec:pcurvature}

We refer the reader to the survey \cite{BCR24} for a complete introduction to the $p$-curvature of linear differential equations over $\bQ(x)$. For the sake of completeness, we introduce in this section the notion of $p$-curvature of differential equations of order $1$ over algebraic functions in $\bK((x))$, where $\bK$ is a number field. Let $\eta\in\bK((x))$ be an algebraic Laurent series over $\bK(x)$. We consider the differential equation
$$
\cL:= \partial -\eta,
$$
where $\partial$ is the usual derivation of $\bK((x))$. By Eisenstein's theorem, for almost all primes $\kp$ of $\OK$, we have $\eta\in\OKp((x))$ and we denote by $\eta_\kp\in k_\kp((x))$ the reduction of $\eta$ modulo $\kp$, where $k_\kp$ stands for the residue field of $\bK$ at $\kp$. For every such prime $\kp$, we consider the reduction of $\cL$ modulo $\kp$:
$$
\cLp=\partial-\etap,
$$
where we still denote by $\partial$ the usual derivation of $k_\kp((x))$. Let us remind the reader that $k_\kp$ is a finite field of characteristic $p$. We consider the map
$$
\begin{array}{rcl}
\Delta: k_\kp((x)) & \rightarrow & k_\kp((x))\\
f & \mapsto & f'-\etap f
\end{array}.
$$
The map $\Delta$ satisfies the following Leibniz rule: for every $f$ and $g$ in $k_\kp((x))$, we have
\begin{equation}\label{eq: Leibniz rule}
\Delta(fg)=f'g+f\Delta(g).
\end{equation}
In particular, $\Delta$ is a $k_\kp((x^p))$-linear map.

\begin{defi}
The $p$-th iterate of $\Delta$, $\Delta^p:k_\kp((x)) \rightarrow k_\kp((x))$, is called the $p$-curvature of~$\cL$.
\end{defi}
Obviously, $\Delta^p$ is $k_\kp((x^p))$-linear, but a key feature of the $p$-curvature is that it is in fact a $k_\kp((x))$-linear map.

\begin{prop}\label{prop: p-curvature linearity}
The $p$-curvature of $\cL$ is a $k_\kp((x))$-linear map.
\end{prop}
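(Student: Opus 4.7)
The strategy is the classical Jacobson-style computation: prove a Leibniz-type formula for iterates of $\Delta$, apply it at the $p$-th iterate, and use the vanishing of $\binom{p}{k}$ modulo $p$ together with the fact that $\partial^p=0$ in characteristic $p$.

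First I would establish by induction on $n\ge 0$ that, for all $f,g\in k_\kp((x))$,
\begin{equation*}
\Delta^n(fg) \;=\; \sum_{k=0}^{n}\binom{n}{k}\,\partial^k(f)\,\Delta^{n-k}(g).
\end{equation*}
The base case $n=0$ is trivial, and the induction step follows directly from the Leibniz rule \eqref{eq: Leibniz rule}: applying $\Delta$ to the summand $\partial^k(f)\Delta^{n-k}(g)$ produces $\partial^{k+1}(f)\Delta^{n-k}(g)+\partial^k(f)\Delta^{n-k+1}(g)$, and reindexing yields the Pascal identity $\binom{n}{k-1}+\binom{n}{k}=\binom{n+1}{k}$. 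This is really just the abstract statement that if $D$ is any $k_\kp$-linear map with $[D,f\cdot]=\partial(f)\cdot$, then its powers satisfy the usual binomial formula.

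Next I would specialize this formula to $n=p$. Since $k_\kp$ has characteristic $p$, the middle binomial coefficients $\binom{p}{k}$ all vanish in $k_\kp$ for $1\le k\le p-1$, so only the $k=0$ and $k=p$ terms survive:
\begin{equation*}
\Delta^p(fg) \;=\; f\,\Delta^p(g)\,+\,\partial^p(f)\,g.
\end{equation*}
Finally, the usual derivation $\partial$ on $k_\kp((x))$ satisfies $\partial^p=0$, because for any monomial $x^n$ one has $\partial^p(x^n)=n(n-1)\cdots(n-p+1)\,x^{n-p}=p!\,\binom{n}{p}\,x^{n-p}=0$ in $k_\kp$; by $k_\kp$-linearity and continuity this extends to all Laurent series. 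Therefore $\Delta^p(fg)=f\,\Delta^p(g)$, which combined with the already known additivity of $\Delta^p$ gives the desired $k_\kp((x))$-linearity.

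The only mildly delicate point is the induction of the first step, where one must be careful that the generalized Leibniz formula is written with $\partial^k(f)$ and not $\Delta^k(f)$ on the left factor; this asymmetry is exactly what makes the $k=p$ term harmless (via $\partial^p=0$) and thus forces linearity. Everything else is formal manipulation once the Leibniz identity \eqref{eq: Leibniz rule} is in hand.
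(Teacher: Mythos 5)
Your proposal is correct and follows exactly the paper's own argument: establish the generalized Leibniz formula $\Delta^n(fg)=\sum_{k=0}^n\binom{n}{k}f^{(k)}\Delta^{n-k}(g)$ by induction from \eqref{eq: Leibniz rule}, then take $n=p$ so that only the $k=0$ and $k=p$ terms survive and use $f^{(p)}=0$ in characteristic $p$. Your added justification that $\partial^p$ annihilates every monomial $x^n$ (including negative exponents, since a product of $p$ consecutive integers is divisible by $p$) is a correct and welcome elaboration of a step the paper leaves implicit.
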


It follows in particular that $\Delta^p$ is completely determined by the value $\Delta^p(1)$.

\begin{proof}
By the Leibniz rule \eqref{eq: Leibniz rule}, one can show that, for every integer $n\geq 0$ and all $f,g\in k_\kp((x))$, we have
$$
\Delta^n(fg)=\sum_{k=0}^n\binom{n}{k}f^{(k)}\Delta^{n-k}(g).
$$
Taking $n=p$, we obtain that $\Delta^p(fg)=f\Delta^p(g)+f^{(p)}g=f\Delta^p(g)$, as desired.
\end{proof}

Proposition \ref{prop: p-curvature linearity} yields the following criterion. 

\begin{coro} \label{coro:2}
The $p$-curvature of $\cL$ is zero if and only if $\cLp$ has a non-zero power series solution in $k_\kp[[x]]$. 
\end{coro}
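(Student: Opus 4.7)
The plan is to use Proposition \ref{prop: p-curvature linearity} to reduce the vanishing of the $p$-curvature to the single scalar identity $\Delta^p(1)=0$ in $k_\kp((x))$, and then to analyse $\ker\Delta$ inside the $p$-dimensional $k_\kp((x^p))$-vector space $k_\kp((x))$ (with basis $1,x,\dots,x^{p-1}$).

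For the implication "non-zero power series solution $\Rightarrow$ $p$-curvature zero", I would take $f\in k_\kp[[x]]\setminus\{0\}$ with $\Delta(f)=0$. Then $\Delta^p(f)=0$, and Proposition \ref{prop: p-curvature linearity} rewrites this as $f\cdot \Delta^p(1)=0$. Since $k_\kp((x))$ is a field and $f\neq 0$, this forces $\Delta^p(1)=0$, hence $\Delta^p\equiv 0$ by the same $k_\kp((x))$-linearity.

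Conversely, assume $\Delta^p=0$. Then $\Delta$ is a nilpotent $k_\kp((x^p))$-linear endomorphism of the non-zero $k_\kp((x^p))$-vector space $k_\kp((x))$, so its kernel is non-zero: if $k\ge 1$ is the nilpotency index, pick $h$ with $\Delta^{k-1}(h)\neq 0$ and take $g:=\Delta^{k-1}(h)$ (or just $g:=1$ if $\Delta=0$). Then $g\in k_\kp((x))\setminus\{0\}$ satisfies $\Delta(g)=0$, and only the adjustment to a power series remains. Writing the $x$-adic valuation as $v_0(g)=qp+r$ with $0\le r<p$, and using that $(x^{-qp})'=0$ in characteristic $p$, the Leibniz rule \eqref{eq: Leibniz rule} gives $\Delta(x^{-qp}g)=x^{-qp}\Delta(g)=0$; moreover $v_0(x^{-qp}g)=r\ge 0$, so $x^{-qp}g\in k_\kp[[x]]$ is a non-zero solution of $\cLp$.

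The only genuinely non-formal step is this last promotion from a Laurent series solution to a power series one; it relies crucially on the characteristic-$p$ identity $(x^p)'=0$, which has no analogue in characteristic zero and is compatible with the $k_\kp((x^p))$-linearity of $\Delta$ used throughout. Everything else is a direct bookkeeping around the $k_\kp((x))$-linearity of $\Delta^p$ provided by Proposition \ref{prop: p-curvature linearity}.
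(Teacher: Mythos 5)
Your proof is correct and follows essentially the same route as the paper's: the forward direction uses the $k_\kp((x))$-linearity of $\Delta^p$ from Proposition \ref{prop: p-curvature linearity} to reduce to $\Delta^p(1)=0$, and the converse deduces a non-zero element of $\ker\Delta$ from nilpotency and then multiplies by a suitable power of $x^p$ (killed by $\partial$ in characteristic $p$) to obtain a power series solution. You merely make explicit two steps the paper leaves implicit, namely why $\Delta^p=0$ forces $\ker\Delta\neq 0$ and the valuation bookkeeping for the power of $x^p$.
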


\begin{proof}
If $\cLp$ has a non-zero solution $f\in k_\kp[[x]]$, then $\Delta^p(f)=0$ and, by Proposition~\ref{prop: p-curvature linearity}, $\Delta^p$ vanishes on all of $k_\kp((x))$. Reciprocally, assume that $\Delta^p$ is identically zero, then the kernel of $\Delta$ is non-zero and there is a non-zero solution $f\in k_\kp((x))$ of $\cLp$. Thus for a suitable integer $k\ge 0$, we deduce from the $ k_\kp((x))$-linearity of $\Delta$ that $x^{kp} f(x)\in k_\kp[[x]]$ is a non-zero solution of $\cLp$, as desired.
\end{proof}

The proof of our criterion Theorem \ref{theo: main} relies on the resolution of the Grothendieck--Katz conjecture for order $1$ equations by the Chudnovsky's \cite[Theorem~8.1]{CC85}. 

\begin{thmx}[Chudnovsky--Chudnovsky \cite{CC85}]\label{theo: CC rank one}
Let $\eta\in\bK((x))$ be algebraic over $\bK(x)$. Then the differential equation $y'=\eta y$ admits a non-zero algebraic solution if and only if its $p$-curvature vanishes for almost all primes $\kp$ of $\OK$.
\end{thmx}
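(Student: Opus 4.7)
The strategy has two parts of very different depth. The direction ``algebraic solution $\Rightarrow$ zero $p$-curvature for almost all $\kp$'' is a reduction-mod-$\kp$ argument going back to Cartier and Katz; the converse is the substantive Grothendieck--Katz conclusion for rank one, due to Chudnovsky--Chudnovsky.

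For the easier direction, I would argue as follows. Given a non-zero algebraic solution $y$ of $y'=\eta y$, Puiseux-expand $y$ at $0$ as $x^{r/d}\sum_{n\ge 0}c_n x^{n/d}$ with $c_0\ne 0$. For almost all primes $\kp$ of $\OK$, we have $p\nmid d$ and, after clearing a uniform denominator, the Puiseux coefficients lie in $\OKp$, so the expansion reduces to a non-zero Puiseux series modulo $\kp$ that still solves the equation. Choose an integer $N$ coprime to $p$ such that $y^N$ is a single-valued Laurent series in $\bK((x))$ (possible since $p\nmid d$); then $y^N$ is algebraic over $\bK(x)$ and satisfies $(y^N)'=N\eta\, y^N$, so its reduction modulo $\kp$ is a non-zero element of $k_\kp((x))$ in the kernel of $\partial-N\etap$. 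Because $N$ is a $\kp$-adic unit, a formal $N$-th root in a finite separable extension of $k_\kp((x))$ converts this into a non-zero element of $k_\kp((x))$ annihilated by $\partial-\etap$. After multiplying by a suitable power of $x^p$ using the $k_\kp((x^p))$-linearity of $\Delta$, one lands in $k_\kp[[x]]$, and Corollary~\ref{coro:2} yields the vanishing of the $p$-curvature.

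For the converse, assume that the $p$-curvature of $\partial-\eta$ vanishes for almost all primes $\kp$ of $\OK$. By Corollary~\ref{coro:2}, for each such $\kp$ there is a non-zero $f_\kp\in k_\kp[[x]]$ solving $f_\kp'=\etap f_\kp$. The plan is to lift these mod-$\kp$ solutions to $\kp$-adic analytic solutions of $y'=\eta y$ by means of the Dieudonn\'e--Dwork lemma from~\S\ref{sec:tools}, which recognizes those formal exponentials $\exp\int \eta$ that are $\kp$-adic unit power series in terms of congruences on the coefficients of $x\eta(x)$. This produces a formal solution $y=\exp\int\eta$ whose denominators are uniformly controlled at almost every finite place of $\bK$. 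One then invokes the Chudnovsky--Chudnovsky $G$-function theorem to deduce that a formal solution of an order-one equation with algebraic coefficients and such global denominator control must itself be algebraic over $\bK(x)$.

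The main obstacle, and the deep content of the theorem, is this last step. The lift from characteristic-$p$ solutions to $\kp$-adic solutions via Dieudonn\'e--Dwork is formal and transfers from $\bQ$ to $\bK$ simply by localizing at $\kp$. But passing from a family of $\kp$-adic formal solutions to a single globally algebraic function requires the full $G$-function machinery of~\cite{CC85}, including size estimates on formal solutions and a global height inequality. The number-field hypothesis plays only a cosmetic role in that argument: throughout one replaces $\bZ_{(p)}$ by $\OKp$ and the $p$-adic absolute value by the normalized $\kp$-adic absolute value, leaving the structure of the proof intact. I would therefore invoke \cite[Theorem~8.1]{CC85} as a black box rather than attempt to reprove it from scratch.
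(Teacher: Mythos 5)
This statement is the paper's Theorem~A, which the authors do not prove: it is imported verbatim from Chudnovsky--Chudnovsky \cite[Theorem~8.1]{CC85}, so there is no internal proof to compare against. Your treatment of the substantive direction (vanishing $p$-curvature for almost all $\kp$ implies an algebraic solution) coincides with the paper's: you cite \cite{CC85} as a black box, which is exactly what the authors do; your intervening narrative about Dieudonn\'e--Dwork lifts and $G$-function machinery is a heuristic gloss on how that reference proceeds, not an argument, but since you defer to the reference this is harmless. Your sketch of the easier direction is extra material relative to the paper, and is essentially right but has one step that needs repair: the formal $N$-th root of a solution of $\partial-N\etap$ lives in $k_\kp((x^{1/N}))$, not in $k_\kp((x))$, so it does not directly produce the element of $k_\kp((x))$ you claim. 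Two clean fixes: (a) since $\eta\in\bK((x))$ forces $y=x^{r/d}g(x)$ with $g\in 1+x\bK[[x]]$, the reduction of the $N$-th power is $x^{r}\bar g^{N}$, and one replaces the fractional power $x^{r/N}$ by $x^{m}$ with $m\equiv r/N \pmod p$, which has the same logarithmic derivative in characteristic $p$; or (b) avoid roots altogether by observing via Theorem~\ref{theo: Jacobson} that $\Delta_N^p(1)=-N\eta_\kp^{(p-1)}-N^p\eta_\kp^p\equiv N\Delta^p(1)\pmod{\kp}$, so for $p\nmid N$ the $p$-curvatures of $\partial-\etap$ and $\partial-N\etap$ vanish simultaneously. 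Note also that within the paper this easier direction is never needed as an independent input: it falls out of the chain $\mathrm{(i)}\Rightarrow\mathrm{(ii)}\Rightarrow\mathrm{(iii)}\Rightarrow(\text{vanishing $p$-curvature})$ established in \S\ref{sec: proof thm main}, where $\mathrm{(i)}\Rightarrow\mathrm{(ii)}$ is obtained by the Dieudonn\'e--Dwork argument of Corollary~\ref{coro:1} rather than by reduction of the solution modulo $\kp$.
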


We shall also need a key arithmetic property of differential equations of order $1$, {\em i.e.} we have a simple formula for the $p$-curvature of $\cL$. It is due to Jacobson \cite{Jac37}; see also \cite[Theorem~3.12]{BCR24}).

\begin{thmx}\label{theo: Jacobson}
Let $\kp$ be a prime above a rational prime $p$ such that $\eta\in\OKp((x))$. Then we have $\Delta^p(1)=-\eta^{(p-1)}+(-\eta)^p$ modulo $\kp\OKp((x))$.
\end{thmx}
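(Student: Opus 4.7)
Proof proposal.

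The plan is to compute the formal generating function $V(t):=\sum_{n\ge 0}\Delta^n(1)\,t^n/n!$ in closed form and then extract $[t^p]V(t)$ modulo $p$. Set $u_n:=\Delta^n(1)\in\OKp((x))$, so that $u_0=1$ and $u_{n+1}=u_n'-\eta u_n$; by induction each $u_n$ is a polynomial with integer coefficients in $\eta,\eta',\ldots,\eta^{(n-1)}$. View $V(t):=\sum_{n\ge 0}(u_n/n!)t^n$ as an element of $(\OKp((x))\otimes_{\bZ}\bQ)[[t]]$. The recurrence is equivalent to the PDE
$$\partial_tV(x,t)=\partial_xV(x,t)-\eta(x)V(x,t),\qquad V(x,0)=1,$$
which determines $V$ uniquely coefficient by coefficient. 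A direct computation using $\partial_t F = (\partial_tW)F$, $\partial_x F = (\partial_xW)F$ with $W:=-\int_0^t\eta(x+s)\,ds$ shows that $F(x,t):=\exp(W)$ satisfies the same PDE and initial condition; hence $V=F$, so
$$V(t)=\exp\!\Biggl(-\sum_{k\ge 1}\frac{\eta^{(k-1)}}{k!}t^k\Biggr).$$

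Next I extract $u_p=p!\cdot[t^p]V(t)$ by expanding the exponential as $\prod_{k\ge 1}\exp(-\eta^{(k-1)}t^k/k!)$:
$$u_p=\sum_{\substack{(m_k)_{k\ge 1}\\ \sum_k km_k=p}}\frac{p!}{\prod_km_k!\,(k!)^{m_k}}\prod_{k\ge 1}(-\eta^{(k-1)})^{m_k}.$$
The coefficient $c_\lambda:=p!/\prod_km_k!(k!)^{m_k}$ attached to the integer partition $\lambda=(1^{m_1}2^{m_2}\cdots)$ of $p$ counts set partitions of $\{1,\ldots,p\}$ with $m_k$ blocks of size $k$, and is therefore a positive integer.

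Finally, let $\bZ/p\bZ$ act on set partitions of $\bZ/p\bZ\simeq\{1,\ldots,p\}$ by translation; each orbit has size $1$ or $p$, so $c_\lambda$ is congruent modulo $p$ to the number of translation-fixed partitions of shape $\lambda$. An elementary argument shows the only such fixed partitions are the single-block partition $\{\bZ/p\bZ\}$ (shape $(p)$, $c=1$) and the all-singletons partition (shape $(1^p)$, $c=1$): a fixed partition either contains a block invariant under the whole group, forcing that block to be $\bZ/p\bZ$, or all its blocks form full $\bZ/p\bZ$-orbits of size $p$, which by cardinality forces blocks of size $1$. Hence modulo $p$ only these two shapes contribute, and summing the two surviving terms yields $\Delta^p(1)\equiv -\eta^{(p-1)}+(-\eta)^p\pmod{\kp\OKp((x))}$. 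The main obstacle is rigorously justifying the closed form for $V(t)$: the expression involves denominators $k!$ that are not $\kp$-integral, so one must work in $(\OKp((x))\otimes\bQ)[[t]]$ and appeal to uniqueness for the PDE to identify $V$ and $F$; the subsequent mod-$p$ combinatorial step is then a standard orbit-counting exercise.
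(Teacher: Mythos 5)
Your argument is correct, and it is genuinely different in substance from what the paper does: the paper's proof of Theorem~\ref{theo: Jacobson} is essentially a citation, setting $b_k$ equal to the constant term of $(\partial-\eta)^k$ and asserting that the proof of \cite[Theorem 3.12]{BCR24} for $\eta\in\bF_p(x)$ carries over \emph{mutatis mutandis} to $k_\kp((x))$, whereas you supply a complete, self-contained derivation. Your route packages the recurrence $u_{n+1}=u_n'-\eta u_n$ into the exponential generating function $V(t)=\exp\bigl(-\sum_{k\ge1}\eta^{(k-1)}t^k/k!\bigr)$ (correctly identified via the transport PDE $\partial_tV=\partial_xV-\eta V$ and uniqueness of its coefficientwise solution), reads off $u_p=\sum_\lambda c_\lambda\prod_k(-\eta^{(k-1)})^{m_k}$ with $c_\lambda$ the number of set partitions of shape $\lambda$, and kills all shapes other than $(p)$ and $(1^p)$ by the $\bZ/p\bZ$-translation orbit count; the two survivors have $c_\lambda=1$ and give exactly $-\eta^{(p-1)}+(-\eta)^p$. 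I checked the small cases ($u_2=-\eta'+\eta^2$, $u_3=-\eta''+3\eta\eta'-\eta^3$) against your formula and the fixed-point analysis of translation-invariant partitions, and everything is sound; the one delicate point you flag, the non-$\kp$-integral denominators $k!$, is handled properly since you work in $(\OKp((x))\otimes_\bZ\bQ)[[t]]$ and only reduce modulo $\kp$ after the coefficients $c_\lambda$ are recognized as integers, using $p\OKp\subseteq\kp\OKp$. What the paper's approach buys is brevity and a pointer to the general Weyl-algebra/Jacobson framework; what yours buys is a proof that is independent of \cite{BCR24}, works verbatim for Laurent series over $\OKp$ (the very point the paper waves at), and makes the combinatorial source of the two surviving terms transparent.
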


\begin{proof}
We adapt the proof given in \cite{BCR24} to our case. For every nonnegative integer $k$, we denote by $b_k\in k_\kp((x))$ the constant term of the differential operator $(\partial-\eta)^k$, so that $\Delta^p(1)=b_p$. The proof of Jacobson's formula in \cite[Theorem 3.12]{BCR24}) is given for $\eta\in\bF_p(x)$ but it extends \textit{mutatis mutandis} to our case $\eta\in k_\kp((x))$. This yields $b_p=-\eta^{(p-1)}+(-\eta)^p$, as desired.
\end{proof}

\subsection{The Dieudonn\'e--Dwork lemma over number fields}

Here and in the sequel, we adopt the following notation. If $\bK$ is a number field, $\tau$ an endomorphism of $\bK$ and $F$ is the generating series of a $\bK$-valued sequence $(a_n)_{n\geq 0}$, then we set
$$
F^\tau(x):=\sum_{n=0}^\infty\tau(a_n)x^n.
$$

A fundamental tool for studying arithmetic properties of exponentials is the Dieudonn\'e--Dwork lemma \cite[Lemma 1]{Dwo58}, see also \cite[p. 53]{DGS94}. It is usually stated in the framework of $p$-adic analysis and holds true in the completion of the maximal unramified extension of $\bQ_p$ in $\mathbb{C}_p$, where $\bC_p$ stands for the completion of the algebraic closure of $\bQ_p$. This lemma implies the following one, whose formulation over number fields better suits our purpose.

\begin{lem}[Dieudonn\'e--Dwork]\label{lem: DD} Let $\bK$ be a number field, $\kp$ an unramified prime of $\bK$ above a rational prime $p$, and $\tau$ the Frobenius endomorphism associated with $\kp$. Let $F\in 1+x\mathbb \bK[[x]]$. Then $F(x)\in 1+x\mathbb \OKp[[x]]$ if and only if $F^\tau(x^p)/F(x)^p \in 1+px\mathbb \OKp[[x]]$.
\end{lem}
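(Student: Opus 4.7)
The plan is to prove both directions by exploiting a single fundamental congruence: whenever $F\in 1+x\OKp[[x]]$, one has
\[
F(x)^p\equiv F^\tau(x^p)\pmod{p\OKp[[x]]}.
\]
This encapsulates the freshman's dream $(\sum a_n x^n)^p\equiv \sum a_n^p x^{np}\pmod{p\OKp[[x]]}$ together with the defining property of the Frobenius $\tau(\alpha)\equiv \alpha^p\pmod{\kp\OKp}$ for $\alpha\in\OKp$. The unramifiedness of $\kp$ is crucial here because it ensures $\kp\OKp=p\OKp$ inside the DVR $\OKp$, which allows these two mod-$\kp$ statements to be combined into a single mod-$p$ statement in $\OKp[[x]]$.

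The forward direction then follows formally. Assuming $F\in 1+x\OKp[[x]]$, the series $F(x)^p$ has constant term $1\in\OKp^\times$ and is therefore a unit in $\OKp[[x]]$, with $F(x)^{-p}\in 1+x\OKp[[x]]$. Writing
\[
\frac{F^\tau(x^p)}{F(x)^p}=1+\frac{F^\tau(x^p)-F(x)^p}{F(x)^p},
\]
the numerator of the correction term lies in $p\OKp[[x]]$ by the fundamental congruence, and has zero constant term because both $F^\tau(x^p)$ and $F(x)^p$ have constant term $1$. Hence it lies in $px\OKp[[x]]$, and multiplying by the unit $F(x)^{-p}$ keeps it inside $px\OKp[[x]]$, giving the desired containment.

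For the reverse direction, write $F=1+\sum_{n\geq 1}a_nx^n$ with $a_n\in\bK$, and assume $G(x):=F^\tau(x^p)/F(x)^p\in 1+px\OKp[[x]]$. I would proceed by induction on $n$ to show $a_n\in\OKp$. The identity $G(x)F(x)^p=F^\tau(x^p)$, together with the multinomial formula
\[
[x^n]F(x)^p=pa_n+Q_n(a_1,\ldots,a_{n-1}),
\]
is the engine. Here $Q_n$ is a polynomial with $\bZ$-coefficients whose nontrivial multinomial coefficients are all divisible by $p$, with one exception: the coefficient of $a_{n/p}^p$, which equals $1$ and appears exactly when $p\mid n$. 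Using the induction hypothesis to place all cross-terms (coming from the higher coefficients of $G$, which lie in $p\OKp$, and from $[x^{n-k}]F(x)^p$ for $k\geq 1$, which lie in $\OKp$) inside $p\OKp$, and extracting the coefficient of $x^n$ from $G(x)F(x)^p=F^\tau(x^p)$, one obtains $pa_n\in p\OKp$ when $p\nmid n$, and $pa_n\equiv \tau(a_m)-a_m^p\pmod{p\OKp}$ when $n=pm$; the latter vanishes modulo $p\OKp$ by the Frobenius congruence. In both cases $a_n\in\OKp$, closing the induction.

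The main obstacle is the bookkeeping of divisibility among the multinomial terms in $[x^n]F(x)^p$: the induction only closes because the residual term $a_{n/p}^p$ that survives modulo $p$ when $p\mid n$ is exactly cancelled by $\tau(a_{n/p})$ modulo $p\OKp$. This is the second (and decisive) point where unramifiedness of $\kp$ intervenes. Once this combinatorial observation is isolated, the argument runs in exact parallel with the classical $\bK=\bQ$ case of the Dieudonn\'e--Dwork lemma.
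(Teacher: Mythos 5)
Your proof is correct, but it takes a different route from the paper: the paper does not prove Lemma~\ref{lem: DD} from scratch at all. It simply invokes the classical Dieudonn\'e--Dwork lemma of \cite[Lemma 1]{Dwo58} (see also \cite[p.~53]{DGS94}), stated in the completion of the maximal unramified extension of $\bQ_p$, and observes that the number-field version follows by embedding $\bK$ into that field via the place $\kp$, under which the Frobenius element $\tau$ becomes the canonical Frobenius lift. Your argument is instead a self-contained, purely algebraic proof: the single congruence $F(x)^p\equiv F^\tau(x^p)\pmod{p\OKp[[x]]}$ (freshman's dream plus $\tau(\alpha)\equiv\alpha^p\bmod\kp\OKp$, glued together by unramifiedness, which gives $\kp\OKp=p\OKp$) settles the forward direction, and a coefficient-by-coefficient induction using $G(x)F(x)^p=F^\tau(x^p)$ settles the converse. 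The key combinatorial point --- that in $[x^n]F(x)^p$ every multinomial coefficient is divisible by $p$ except the one attached to $a_{n/p}^p$ when $p\mid n$, and that this exceptional term is absorbed by $\tau(a_{n/p})$ modulo $p\OKp$ --- is exactly right, and both places where you invoke unramifiedness are genuinely needed. What your approach buys is self-containedness and the fact that it never leaves $\OKp$; what the paper's approach buys is brevity, at the cost of requiring the reader to carry out the (routine but unstated) transfer from the $p$-adic setting to the number field. Two small points worth making explicit if you write this up: $\tau$ preserves $\OKp$ because $\tau(\kp)=\kp$ (so $F^\tau\in 1+x\OKp[[x]]$ in the forward direction), and the congruence $\tau(\alpha)\equiv\alpha^p\bmod\kp\OKp$ for $\alpha\in\OKp$ (not merely $\alpha\in\OK$) requires the one-line verification that it passes to the localization.
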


In our context, we aim at studying $\exp(s(x))$ where $s(x)\in x\bK[[x]]$. This lemma yields the following important result that relates integrality properties of $\exp(s(x))$ to congruences satisfied by $s(x)$, which are often easier to study.

\begin{coro}\label{coro:1} Let $\bK$ be a number field, $\kp$ an unramified prime of $\bK$ above a rational prime $p$, and $\tau$ the Frobenius endomorphism associated with $\kp$. Let $s\in x\mathbb \bK[[x]]$. Then $\exp(s(x))\in 1+x\OKp[[x]]$ if and only if $s^\tau(x^p)-ps(x)\in px\OKp[[x]]$.
\end{coro}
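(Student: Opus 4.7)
The plan is to apply Lemma \ref{lem: DD} to $F(x):=\exp(s(x))$, which lies in $1+x\bK[[x]]$ since $s\in x\bK[[x]]$, and then translate its conclusion in terms of $s$ rather than $F$. The lemma immediately gives that $\exp(s)\in 1+x\OKp[[x]]$ is equivalent to $F^\tau(x^p)/F(x)^p\in 1+px\OKp[[x]]$, so the whole task reduces to recasting this latter condition as a congruence on $s$.

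First, since $\tau$ acts on $\bK$ as a ring automorphism and the formal exponential is given by universal polynomial identities (Bell polynomials) in the coefficients of its argument, we have $F^\tau(x)=\exp(s^\tau(x))$. Combined with $F(x)^p=\exp(ps(x))$, this yields
\[
\frac{F^\tau(x^p)}{F(x)^p}=\exp\bigl(s^\tau(x^p)-ps(x)\bigr).
\]
Setting $u(x):=s^\tau(x^p)-ps(x)\in x\bK[[x]]$, the corollary reduces to the auxiliary equivalence
\[
\exp(u)\in 1+px\OKp[[x]]\iff u\in px\OKp[[x]].
\]

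To establish this auxiliary equivalence, I would invoke the standard $p$-adic bijection between $px\OKp[[x]]$ and $1+px\OKp[[x]]$ afforded by $\exp$ and $\log$. The direction $(\Leftarrow)$ uses the estimate $v_p(n!)\le (n-1)/(p-1)$ to ensure $u^n/n!\in px\OKp[[x]]$ for every $n\ge 1$. For $(\Rightarrow)$, I would exploit the formal identity $u=\log(\exp(u))$ in $x\bK[[x]]$: the estimate $v_p(n)\le \log_p n$ shows that $\log(G)\in px\OKp[[x]]$ whenever $G\in 1+px\OKp[[x]]$, and applying this to $G=\exp(u)$ upgrades the a priori membership $u\in x\bK[[x]]$ to $u\in px\OKp[[x]]$. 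The only genuine $p$-adic content—and hence the main, though mild, obstacle—lies in verifying that the valuations $v_p(p^n/n!)$ and $v_p(p^n/n)$ are both $\ge 1$ for all $n\ge 1$, which is precisely what preserves the factor of $p$ through $\exp$ and $\log$; everything else is bookkeeping around Lemma \ref{lem: DD}.
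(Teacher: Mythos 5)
Your proposal is correct and follows essentially the same route as the paper: apply the Dieudonn\'e--Dwork lemma to $F=\exp(s)$, identify $F^\tau(x^p)/F(x)^p$ with $\exp(s^\tau(x^p)-ps(x))$, and transfer the factor of $p$ through $\exp$ and $\log$ via the valuation bounds $v_p(p^n/n!)\ge 1$ and $v_p(p^n/n)\ge 1$. The only cosmetic difference is that the paper writes the hypothesis as $u=pt$ with $t\in x\OKp[[x]]$ and expands $e^{pt}$ and $\log(1+pt)$ explicitly, which is the same computation.
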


\begin{proof}
The proof is based on that of \cite[Corollary 6.7]{LY98} that we extend to number fields for the sake of completeness. Set $F(x)=\exp(s(x))\in 1+ x\bK[[x]]$.

\textit{Proof of the if part}. Assume that $s^\tau(x^p)-ps(x)=pt(x)$ for some $t\in x\OKp[[x]]$. By Legendre's formula, for every positive integer $n$, we have
$$
v_p(n!)=\sum_{\ell=1}^\infty\left\lfloor\frac{n}{p^\ell}\right\rfloor < \sum_{\ell=1}^\infty\frac{n}{p^\ell}\leq \frac{n}{p-1},
$$
so that $v_p(n!) < n$. It follows that
$$
\frac{F^\tau(x^p)}{F(x)^p}=e^{pt(x)}=1+\sum_{n=1}^\infty \frac{p^n}{n!}t(x)^n\in 1+px\OKp[[x]].
$$
By Lemma \ref{lem: DD}, we obtain that $F(x)\in 1+x\OKp[[x]]$ as desired.

\textit{Proof of the only if part}. Assume that  $F\in 1+x\OKp[[x]]$. By Lemma \ref{lem: DD}, there exists $t\in x\OKp[[x]]$ such that
$$
\exp(s^\tau(x^p)-ps(x))=\frac{F^\tau(x^p)}{F(x)^p}=1+pt(x),
$$
so that
$$
s^\tau(x^p)-ps(x)=\log(1+pt(x))=\sum_{n=1}^\infty (-1)^{n+1}\frac{p^nt(x)^n}{n}\in px\OKp[[x]],
$$
as desired.
\end{proof}

\section{Proofs of the arithmetic criterion and of its consequences}\label{sec: proof thm main}

This section is devoted to the proofs of Theorem \ref{theo: main} and of Corollary \ref{coro:super}. We first need the following result on sequences satisfying the Cartier property.

\begin{lem}\label{lem: Cartier rational}
Let $\bK$ be a Galois number field and $\eta\in\bK((x))$ be a Laurent series satisfying the Cartier property (at $0$). Then $\eta\in\bQ+x\bK[[x]]$.
\end{lem}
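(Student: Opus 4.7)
Write $\eta = \sum_{n \in \bZ} a_n x^n$ with $a_n \in \bK$ and $a_n = 0$ for $n < r$ for some $r \in \bZ$. The hypothesis supplies a cofinite set $S$ of unramified primes $\kp$ of $\OK$ such that, letting $p$ denote the rational prime below $\kp$ and $\tau_p$ the associated Frobenius, we have $a_{np} - \tau_p(a_n) \in p\OKp$ for every $n \in \bZ$. The plan is to derive the two conclusions---$a_n = 0$ for all $n < 0$ and $a_0 \in \bQ$---separately, each by a short contradiction argument combining the Cartier congruence with the finiteness of the support of the $\kp$-adic valuations on $\bK^\times$.

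\textbf{Vanishing of negative-index coefficients.} Suppose for contradiction that some $a_n$ with $n < 0$ is nonzero, and let $r < 0$ be the smallest such index. For any $\kp \in S$, the inequality $rp < r$ (which holds because $r < 0$ and $p \geq 2$) forces $a_{rp} = 0$ by minimality of $r$, and the Cartier congruence at $n = r$ collapses to $\tau_p(a_r) \in p\OKp$. Since $\tau_p$ lies in the decomposition group of $\kp$, it stabilizes $\OKp$ and the ideal $p\OKp$, so equivalently $a_r \in p\OKp$, i.e.\ $v_\kp(a_r) \geq v_\kp(p) = 1$ (using that $\kp$ is unramified). This holds for all but finitely many $\kp$, contradicting the fact that a nonzero element of $\bK^\times$ has positive $\kp$-valuation for only finitely many primes.

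\textbf{Rationality of $a_0$.} Taking $n = 0$ in the Cartier congruence gives $a_0 - \tau_p(a_0) \in p\OKp \subset \kp$ for every $\kp \in S$. Fix an arbitrary $\sigma \in \Gal(\bK/\bQ)$. By the Chebotarev density theorem, infinitely many rational primes $p$ satisfy $\tau_p = \sigma$; after removing the finitely many whose primes above do not all lie in $S$, we obtain infinitely many primes $\kp$ with $a_0 - \sigma(a_0) \in \kp$. By the same finite-support argument as above, $a_0 - \sigma(a_0) = 0$. As $\sigma$ is arbitrary and $\bK/\bQ$ is Galois, $a_0$ is fixed by $\Gal(\bK/\bQ)$, hence $a_0 \in \bQ$.

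The argument is essentially routine; the only step deserving comment is the appeal to Chebotarev density, which is needed because the Cartier congruences only determine $\tau_p(a_0)$ for Frobenius elements, and one must ensure every element of $\Gal(\bK/\bQ)$ is realized in this way.
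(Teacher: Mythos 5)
Your proof is correct and follows essentially the same route as the paper's: kill the negative-index coefficients via the Cartier congruences for large $p$ together with the finiteness of the set of primes at which a nonzero element of $\bK^\times$ has positive valuation, then use Chebotarev to show $a_0$ is fixed by all of $\Gal(\bK/\bQ)$. The only (immaterial) differences are that you treat just the minimal nonzero negative index and pass from $\tau_p(a_r)\in p\OKp$ to $a_r\in p\OKp$ by noting $\tau_p$ stabilizes $\kp$, whereas the paper handles all negative $n$ at once and instead pigeonholes over the finite Galois group to find a single $\sigma$ with $\sigma(a_n)\in p\OKp$ infinitely often.
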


\begin{proof}
Write
$$
\eta(x)=\sum_{n=r}^\infty a_n x^n\in\bK((x)),
$$
where $r\in\bZ$. Then, for almost all primes $\kp$, and all $n\in\bZ$, $\kp$ is unramified in $\bK$ and we have
$$
a_{np}-\tau_p(a_n)\in p\OKp,
$$
where $\tau_p\in\Gal(\bK/\bQ)$ is the Frobenius element associated with $\kp$, and $\kp$ is above the rational prime $p$. In particular, when $p>|r|$ and $n$ is a negative integer, we obtain that $a_{np}=0$ so $\tau_p(a_n)\in p\OKp$. Since the Galois group $\Gal(\bK/\bQ)$ is finite, there exists $\sigma\in\Gal(\bK/\bQ)$ such that $\sigma(a_n)\in p\OKp$ for infinitely many primes $\kp$. Hence $\sigma(a_n)=0$ and $a_n=0$. This proves that $\eta\in\bK[[x]]$.
\medskip

In addition, for almost all primes $\kp$ we have $a_0-\tau_p(a_0)\in p\OKp$. By Chebotarev's density theorem \cite[Ch. VIII, Theorem 10, p. 169]{LangANT}, for every $\sigma\in\Gal(\bK/\bQ)$, there exist infinitely many primes $\kp$ such that $\tau_p=\sigma$. For every such $\kp$, we obtain that $a_0-\sigma(a_0)\in p\OKp$, hence $a_0=\sigma(a_0)$. Since $\sigma$ is arbitrary in $\Gal(\bK/\bQ)$, it follows that $a_0\in\bQ$, as desired.
\end{proof}

We fix $\eta\in\Qbar((x))$ algebraic over $\Qbar(x)$. We also fix a Galois number field $\bK$ such that $\eta\in\bK((x))$. Observe that we trivially have $\mathrm{(ii)} \Rightarrow\mathrm{(iii)}$ in Theorem \ref{theo: main}. So it remains to prove the implications $\mathrm{(i)} \Rightarrow\mathrm{(ii)}$ and $\mathrm{(iii)} \Rightarrow\mathrm{(i)}$.

\subsection{Proof of $\mathrm{(i)}\Rightarrow\mathrm{(ii)}$ in Theorem \ref{theo: main}}

Assume that there exists a non-zero algebraic function $y$ such that $y'=\eta y$. Let us prove that $x\eta(x)$ has the Gauss property at $0$. By the Newton-Puiseux theorem \cite[p. 
68, Proposition~8]{serre}, $y$ admits a Puiseux expansion:
$$
y(x)=\sum_{n=r}^\infty b_nx^{n/d}=x^{r/d}\sum_{n=0}^\infty b_{n+r}x^{n/d}\in\mathbb{C}((x^{1/d})),
$$
for some integers $r$ and $d\geq 1$ and with $b_r$ non-zero. We also have
$$
y'(x)=\frac{1}{d}\sum_{n=r}^\infty nb_nx^{n/d-1}=\frac{x^{r/d-1}}{d}\sum_{n=0}^\infty (n+r)b_{n+r}x^{n/d}.
$$
It follows that $\eta=y'/y$ admits a Puiseux expansion of the form
\begin{equation}\label{Puiseuxeta2}
\eta(x)=\sum_{n=0}^\infty c_nx^{n/d-1}\in\mathbb{C}((x^{1/d})),
\end{equation}
with $c_0=r/d$. By uniqueness of the Puiseux expansion of $\eta$ at $0$ and since by assumption $\eta\in\bK((x))$, we obtain that $x\eta(x)\in\bK[[x]]$. Write
$$
x\eta(x)=\sum_{n=0}^\infty a_n x^n\quad\textup{and}\quad y(x)=x^{a_0}\exp\bigg(\sum_{n=1}^\infty \frac{a_{n}}n x^{n}\bigg)=x^{a_0}g(x), 
$$
where $g(x)\in\bK[[x]]$ and $a_0=r/d\in\bQ$. 

It follows that $g(x)=x^{-a_0}y(x)\in 1+x\bK[[x]]$ is also algebraic over $\Qbar(x)$, and there exists a positive integer $\lambda$ such that $g(\lambda x)\in\OK[[x]]$ by Eisenstein's theorem. Hence, $g\in 1+x\mathbb \OKp[[x]]$ for any prime $\kp$ of $\OK$ that does not divide $\lambda$. By Corollary~\ref{coro:1}, for any prime number $p$ that does not divide $\lambda$, and any prime $\kp$ above $p$, we have
$$
\sum_{n=1}^\infty\frac{\tau(a_n)}{n}x^{np}-p\sum_{n=1}^\infty\frac{a_n}{n}x^n\in px\OKp[[x]].
$$
In particular, for every $n\geq 1$, we have
$$
\frac{\tau(a_n)}{n}-p\frac{a_{np}}{np}\in p\OKp,
$$
which yields $\tau(a_n)- a_{np}\in np\OK$. Since $a_0\in\bQ$, this latter congruence also holds for $n=0$. It follows that $x\eta(x)$ satisfies Gauss congruences for the prime $\kp$. Since almost all primes $\kp$ do not divide $\lambda$, $x\eta(x)$ has the Gauss property.$\hfill\square$ 

\subsection{Proof of $\mathrm{(iii)}\Rightarrow\mathrm{(i)}$ in Theorem \ref{theo: main}}

We assume that $x\eta(x)$ has the Cartier property. By Lemma \ref{lem: Cartier rational}, we have
$$
x\eta(x)=\sum_{n=0}^\infty a_n x^n\in\bQ +x\bK[[x]].
$$
By Eisenstein's theorem, there exists a positive integer $\lambda$ such that, for every $n\geq 1$, we have $\lambda^n a_n\in\OK$, so that
$$
\etilde(x):=\lambda\eta(\lambda x)-\frac{a_0}{x}\in\OK[[x]].
$$
Let us show that $x\etilde(x)$ has the Cartier property. If $p$ is a prime number not dividing $\lambda$ and $\kp$ is a prime of $\OK$ above $p$, then for every positive integer $n$, we have
\begin{align*}
    \lambda^{np}a_{np}-\tau_p(\lambda^na_n)&=\lambda^{np}a_{np}-\lambda^n\tau_p(a_n)\\
    &=\lambda^{np}(a_{np}-\tau_p(a_n))+(\lambda^{np}-\lambda^n)\tau_p(a_n)\\
    &\equiv \lambda^{np}(a_{np}-\tau_p(a_n))\mod p\OKp,
\end{align*}
because $\tau_p(a_n)\in\OKp$ as $\lambda^n a_n\in\OK$ and $\tau_p$ sends $\OKp$ to $\OKp$. Since $x\eta(x)$ has the Cartier property and $\lambda$ is an integer, we obtain that $\lambda^{np}(a_{np}-\tau_p(a_n))\in p\OKp$. It follows that $\lambda^{np}a_{np}- \tau_p(\lambda^na_n)\in p\OKp$, and $x\etilde(x)$ has the Cartier property, as expected. We write
$$
\etilde(x)=\sum_{n=1}^\infty b_n x^{n-1}\in \OK[[x]].
$$

We are now in position to extend Bostan's argument \cite{Bos24} to the Galois number field case. We shall prove that for almost all prime numbers $p$ and every prime $\kp$ above $p$, the $p$-curvature of $y'=\etilde y \mod\kp$ vanishes. Let $p$ be an odd prime number and $\kp$ a prime above $p$. By Theorem \ref{theo: Jacobson} in \S\ref{ssec:pcurvature}, the $p$-curvature of $y'=\etilde y \mod\kp$ is 
equal to $-\etilde^{(p-1)}-\etilde^p\mod\kp$. We have
\begin{align*}
\etilde^{(p-1)}(x)&=\sum_{n=1}^\infty b_n(n-1)\cdots (n-p+1)x^{n-p}\\
&\equiv -\sum_{n=1}^\infty b_{np}x^{(n-1)p}\mod\kp\OK[[x]],
\end{align*}
because $b_n\in\OK$ and $(n-1)\cdots(n-p+1)$ is divisible by $p$ if $n$ is coprime to $p$, while $(n-1)\cdots (n-p+1)\equiv -1\mod p$ by Wilson's theorem when $p$ divides $n$. Furthermore, since $\etilde\in\OK[[x]]$, we have 
$$
\etilde(x)^p\equiv \sum_{n=1}^\infty \tau_p(b_n) x^{(n-1)p}\mod \kp\OK[[x]].
$$
Since $x\etilde(x)$ has the Cartier property, for almost all prime numbers $p$ and every prime $\kp$ above $p$, we have $\etilde^{(p-1)}+\etilde^p\equiv 0\mod\kp$ and the $p$-curvature of $y'=\etilde y\mod\kp$ vanishes for almost all $\kp$. By Chudnovsky and Chudnovsky's \cite[Theorem 8.1]{CC85}, recalled as Theorem~\ref{theo: CC rank one} in \S\ref{ssec:pcurvature}, all the solutions of $y'=\etilde y$ are algebraic over $\Qbar(x)$.

Let $y$ be a non-zero algebraic solution of $y'=\etilde y$. Since $a_0$ is a rational number, $x^{a_0}y(x/\lambda)$ is a non-zero algebraic solution of $y'=\eta y$, which proves Assertion $\mathrm{(i)}$ and Theorem \ref{theo: main}.
$\hfill\square$ 
\medskip

In this proof, we observe that the vanishing of the $p$-curvature for almost all $p$ implies Gauss's congruences, and consequently congruences modulo higher powers of $p$. This is closely related to the phenomenon described by Movasati in \cite{Movasati}.

\subsection{Proof of Corollary \ref{coro:super}}

Let $\eta$ be an algebraic function over $\Qbar(x)$ and $\delta\in\Qbar$.
The function $(x-\delta)\eta(x)$ has a Puiseux expansion $\sum_{n=r}^\infty p_n (x-\delta)^{n/d}$ at $\delta\in\Qbar$ with $p_n\in\bK$ for all $n$, where $\bK$ is a Galois number field containing $\delta$. We make the change of variables $x=t^d+\delta$ which yields the differential equation $\tilde{y}'(t)=\tilde{\eta}(t)\tilde{y}(t)$, with $\tilde{y}(t):=y(t^d+\delta)$ and 
$$
t\tilde{\eta}(t):=dt^{d}\eta(t^d+\delta)=\sum_{n=r}^\infty dp_nt^{n}\in \bK((t)). 
$$
Applying Theorem~\ref{theo: main} to $\tilde{\eta}(t)$, we obtain that the differential equation $y'=\eta y$ admits a non-zero solution $y$ algebraic over $\Qbar(x)$ if and only if 
$t\tilde{\eta}(t)$ has the Cartier (resp. Gauss) property at $0$. This is equivalent to saying that the sequence $(dp_{n})_{n\in\bZ}$ has the Cartier (resp. Gauss) property, or equivalently that $(x-\delta)\eta(x)$ has the Cartier property at $\delta$. Indeed, if $\kp$ is a prime not dividing $d$, then $(p_n)_{n\in\bZ}$ has Cartier (resp. Gauss) congruences modulo $\kp$ if and only if $(dp_n)_{n\in\bZ}$ has Cartier (resp. Gauss) congruences modulo $\kp$.
\medskip

Similarly, when $\delta=\infty$, the function $x\eta(x)$ has a Puiseux expansion $\sum_{n=r}^\infty p_n x^{-n/d}$ at $\infty$ with $p_n\in\bK$ for all $n$, where $\bK$ is a Galois number field.  The change of variables $x=1/t^d$ yields the differential equation $\tilde{y}'(t)=\tilde{\eta}(t)\tilde{y}(t)$, with $\tilde{y}(t):=y(1/t^d)$ and 
$$
t\tilde{\eta}(t):=-dt^{-d}\eta(t^{-d})=\sum_{n=r}^\infty (-dp_n)t^{n}\in \bK((t)). 
$$
This shows that $y'=\eta y$ has a non-zero algebraic solution over $\Qbar(x)$ if and only if $(-dp_{n})_{n\in\bZ}$ has the Cartier (resp. Gauss) property, which is equivalent to saying that $x\eta(x)$ has the Cartier (resp. Gauss) property at $\infty$.
$\hfill\square$

\bigskip

\noindent {\bf Acknowledgements.} We warmly thank Sara Checcoli, Vasily Golyshev, Maxim Kontsevich and Julien Roques for very useful discussions about this project. Theorem~\ref{theo: HypAvgRoots} was motivated in part by a hypergeometric example considered by Vasily Golyshev: we thank him very much for letting us know his example (unfortunately not covered by our result).

\bigskip

\noindent \'Eric Delaygue, Institut Camille Jordan, 
Universit\'e Claude Bernard Lyon 1, 
43 boulevard du 11 novembre 1918, 
69622 Villeurbanne cedex, France\\
delaygue (at) math.univ-lyon1.fr

\medskip

\noindent Tanguy Rivoal, Institut Fourier, Universit\'e Grenoble Alpes, CNRS, CS 40700, 38058 Grenoble cedex 9, France\\
tanguy.rivoal (at) univ-grenoble-alpes.fr

\bigskip

\noindent Keywords: Abel's problem, Algebraic functions, Cartier and Gauss congruences.

\medskip

\noindent MSC2020: 11A07, 33C20; 34A05, 05A15.

\end{document}